\numberwithin{equation}{section}
\crefname{section}{Section}{Sections}
\crefname{figure}{Figure}{Figures}
\crefname{table}{Table}{Tables}
\crefname{equation}{}{}
\crefname{theorem}{Theorem}{Theorems}
\crefname{lemma}{Lemma}{Lemmas}
\crefname{remark}{Remark}{Remarks}
\crefname{problem}{Problem}{Subproblems}
\newtheorem{theorem}{Theorem}[section]
\newtheorem{problem}{Problem}[section]
\newtheorem{remark}{Remark}[section]
\newtheorem{lemma}{Lemma}[section]
\newtheorem{proposition}{Proposition}[section]
\theoremstyle{definition}
\newtheorem{example}{\noindent Example}[subsection]
\crefname{ip}{Co-inversion Problem}{ips}
\newtheoremstyle{MyThmStyle}
{}
{}
{}
{}
{\bfseries}
{}
{ }
{\thmname{#1\thmnumber{ #2\hspace{0.5em}}}\thmnote{(#3)}}
\theoremstyle{MyThmStyle}
\crefname{subisp}{inverse source problem}{ips}
\crefname{subiop}{inverse obstacle problem}{ips}
\definecolor{bananamania}{rgb}{0.98, 0.91, 0.71}
\begin{document}
	
\title{Recovering source location, polarization, and shape of obstacle from elastic scattering data}
		\author{
			Yan Chang\thanks{School of Mathematics, Harbin Institute of Technology, Harbin, China. {\it $21B312002@stu.hit.edu.cn$}},
			Yukun Guo\thanks{School of Mathematics, Harbin Institute of Technology, Harbin, China. {\it ykguo@hit.edu.cn} (Corresponding author)},
			Hongyu Liu\thanks{Department of Mathematics, City University of Hong Kong, Hong Kong SAR, China. {\it hongyliu@cityu.edu.hk}},
			\ and Deyue Zhang\thanks{School of Mathematics, Jilin University, Changchun, China, {\it dyzhang@jlu.edu.cn}}
		}
	\date{}
	\maketitle

\begin{abstract}
We consider an inverse elastic scattering problem of simultaneously reconstructing a rigid obstacle and the excitation sources using near-field measurements. A two-phase numerical method is proposed to achieve the co-inversion of multiple targets. In the first phase, we develop several indicator functionals to determine the source locations and the polarizations from the total field data, and then we manage to obtain the approximate scattered field. In this phase, only the inner products of the total field with the fundamental solutions are involved in the computation, and thus it is direct and computationally efficient. In the second phase, we propose an iteration method of Newton's type to reconstruct the shape of the obstacle from the approximate scattered field. Using the layer potential representations on an auxiliary curve inside the obstacle, the scattered field together with its derivative on each iteration surface can be easily derived. Theoretically, we establish the uniqueness of the co-inversion problem and analyze the indicating behavior of the sampling-type scheme. An explicit derivative is provided for the Newton-type method. Numerical results are presented to corroborate the effectiveness and efficiency of the proposed method.
\end{abstract}

\textbf{Keywords:} Co-inversion, inverse scattering, inverse source, elastic wave, Newton-type method, sampling.

\section{Introduction}

The identification of multiple targets of distinct nature from the scattering data has significant applications in various areas such as nondestructive testing, medical imaging, and geophysical exploration. In the scenario of the inverse problems for the wave equations, the excitation sources emit the signal actively while the obstacle serves the role to make a passive reaction to the imposed information. Hence, the source and the obstacle are typically viewed as two intrinsically distinct components in the scattering system. Due to varying practical desires, the reconstruction of either the source points or the obstacle has received enduring attention.
Depending on the reconstructed targets, the inverse source problems usually aim to recover the source from the radiated field, where there is no obstacle. Meanwhile, the inverse obstacle problems serve the purpose to identify the obstacle with a given incident field.

Typical numerical methods for the inverse elastic source problems include the recursive algorithm by Bao et.al \cite{BaoChenLi16}, the sampling-type method \cite{Wang22}, the full waveform inversion method \cite{SP}, and the fast Bayesian method for seismic source inversion \cite{Long}. Meanwhile, elastic wave scattering problems have received ever-increasing attention in recent years. For instance, Ji et. al \cite{JiLiuXiDSM} considered the inverse elastic scattering and proposed three direct sampling methods for location and shape reconstruction using different components of the far field patterns. Chen and Huang \cite{RTMChen} introduced the reverse time migration method to reconstruct the extended obstacle from the scattered field. Recently, the authors of \cite{LZ22} present a study on the time reversal method to recover multiple elastic particles in three dimensions.  In addition, classical algorithms for the inverse elastic scattering problems include the linear sampling method \cite{Alves, Arens}, the factorization method \cite{CKfactorization}, and recently the iterative method \cite{BaoHu} as well as the method of topological derivative by Guizina et. al \cite{Guizina}.

In many scenarios, both the obstacle and the source are unknown, which makes it meaningful to simultaneously reconstruct the two targets with the passive measurements, namely the measured wave data generated by the anomalous source. As a composition of the aforementioned problems, the co-inversion problem is more complicated and practically significant. Compared with the vast studies on single-inversion problems, studies on the co-inversion problem are relatively rare. For some recent works on co-inversion problems, we refer to \cite{era, LiLiuMa19, LiLiuMa21, ZGWC22, ZWG22}. In this paper, we consider an inverse elastic problem to simultaneously reconstruct the rigid obstacle and its excitation source points from time-harmonic total field data. The overall idea of the current work is divided into two steps. We first recover the source points together with the polarization from the total field data by the direct imaging method. Once the sources have been retrieved, the next step is to determine the shape of the obstacle by an easy-to-implement Newton-type iteration method.

 To be specific, we summarize the salient features of the proposed method as follows. First, we propose a two-phase sampling method to determine the source locations and the polarization from the total field. In the first phase, we implement the sampling procedure toward the spatial location, and the source location is identified via the significant maximizer of the indicator function. Then in the second phase, another sampling scheme is proposed to find the polarization direction. Second, by incorporating the idea of the traditional decomposition method \cite{CK19} into the Newton iteration, we develop a novel Newton-type framework by treating the ill-posedness and the nonlinearity of the inverse problem separately. In particular, by the Helmholtz decomposition and the layer potential techniques, the derivative of the boundary-to-data mapping can be calculated easily, thus the algorithm requires neither a forward solver nor alternative iterations between the sources and the obstacle involved in the novel Newton-type method. Hence, it is computationally efficient. Third, we demonstrate the effectiveness and efficiency of the proposed method through extensive numerical experiments. Furthermore, we extend this method to the co-inversion in the three-dimensional problem. Last but not least, the proposed hybrid method is comprised of a sampling scheme for source recovery and an iteration scheme for obstacle recovery, which are novel in their own right to solve the inverse source problem and the inverse obstacle scattering problem, respectively.

The rest of this paper is arranged as follows: In the next section, we introduce the co-inversion problem under consideration and address the uniqueness results. In \Cref{sec:DSM}, we propose two sampling schemes to identify the source locations and polarization from the measured total field. Mathematical justifications for the sampling schemes are provided. By subtracting the incident field due to the reconstructed sources from the total field, the co-inversion problem is reformulated into an inverse obstacle scattering problem. Then, a Newton-type method based on the single layer technique is proposed in \Cref{sec:Newton} to recover the shape of the obstacle from the approximate scattered field. In \Cref{sec:numerical}, we conduct several numerical experiments to verify the effectiveness and efficiency of our method. Finally, some conclusions are drawn in \Cref{sec:conclusion}.

\section{Problem setting and uniqueness}\label{sec:model}
In this section, we first give a brief description of the forward and inverse problems under consideration. Then a uniqueness result will be addressed.

\subsection{Model problem}
Let $D\subset\mathbb{R}^d\,(d=2,3)$ be an open and bounded Lipschitz domain such that the exterior $\mathbb{R}^d\backslash\overline{D}$ is connected. Assume that $\mathbb{R}^d\backslash\overline{D}$ is occupied by a homogeneous and isotropic elastic medium with a normalized  mass density. Let $\omega>0$ be the angular frequency and $\lambda,\,\mu$ be the Lam\'{e} constants such that $\mu>0,d\lambda+2\mu>0$. Denote by $k_p=\omega/\sqrt{\lambda+2\mu}=\omega/c_p$ and $k_s=\omega/\sqrt{\mu}=\omega/c_s$ respectively the compressional and shear wave numbers. Given a generic point $z\in\mathbb{R}^d\backslash\overline{D}$ and polarization $\bm{p}\in\mathbb{S}^{d-1}:=\{x\in\mathbb{R}^d:|x|=1\}$, the incident field $\bm{u}^i$ due to the source located at $z$ satisfies the Navier equation
\begin{align}\label{eq:ui}
\Delta^*\bm{u}^i+\omega^2\bm{u}^i=-\delta(x-z)\bm{p}\quad\text{in }\mathbb{R}^d\backslash\overline{D},
\end{align}
where $\delta(x-z)$ is the Dirac delta distribution at point $z,$ the Lam\'{e} operator $\Delta^*$ is defined by
\[
\Delta^*\bm{u}:=\mu\Delta\bm{u}+(\lambda+\mu)\nabla\nabla\cdot\bm{u}.
\]
Explicitly, we have
\begin{align}
\label{eq:inc}
\bm{u}^i=\bm{u}^i(x; z,\bm{p})=\mathbb{G}(x, z)\bm{p},\quad x\in\mathbb{R}^d\backslash(\overline{D}\cup\{z\}),
\end{align}
where $\mathbb{G}(x, z)$ is the fundamental solution to the Navier equation, i.e., (cf. \cite{BaoXuYin17})
\begin{equation}\label{eq:Green}
\mathbb{G}(x, z)=\frac{1}{\mu}\Phi_s(x, z)\mathbb{I}_d+\frac{1}{\omega^2}\nabla_x\nabla_x^\top(\Phi_s(x, z)-\Phi_p(x, z)).
\end{equation}
Here, $\mathbb{I}_d$ is the $d\times d$ identity matrix, $\Phi_\alpha (\alpha=p, s)$ denotes the fundamental solution to the Helmholtz equation with wave number $k_\alpha$, i.e.,
\begin{equation}\label{eq:Phi}
\Phi_\alpha(x, z)=\left\{
\begin{aligned}
&\frac{\mathrm{i}}{4}H_0^{(1)}(k_\alpha|x-z|),&&d=2,\\
&\frac{\mathrm{e}^{\mathrm{i}k_\alpha|x-z|}}{4\pi |x-z|},&&d=3,
\end{aligned}
\right.
\qquad \alpha=p, s,
\end{equation}
where $H_n^{(1)}$ the Hankel function of the first kind of order $n$. It holds that $\mathbb{G}=\mathbb{G}_p+\mathbb{G}_s$ where 
\begin{align}\label{eq:GpGs}
\mathbb{G}_p(x, z)=-\frac{1}{\mu k_s^2}\nabla_x\nabla_x^\top \Phi_p(x, z),\quad
\mathbb{G}_s(x, z)=\frac{1}{\mu}\left(\mathbb{I}_d+\frac{1}{k_s^2}\nabla_x\nabla_x^\top\right)\Phi_s(x, z),
\end{align}

Let the obstacle be illuminated by $\bm{u}^i,$ then the displacement field of the scattered wave is described by a solution $\bm{v}$ of the boundary value problem
\begin{equation}\label{eq:boundaryvalueproblem}
\left\{
\begin{aligned}
\Delta^*\bm{v}+\omega^2\bm{v}&=0,\quad&&\text{in }\mathbb{R}^d\backslash\overline{D},\\
\bm{u}&=0,&&\text{on }\partial D,
\end{aligned}
\right.
\end{equation}
where $\bm{u}=\bm{u}^i+\bm{v}$ is the total field with the scattered field $\bm{v}$ satisfying the Kupradze-Sommerfeld radiation condition
\begin{align*}
\lim\limits_{\rho\to\infty}\rho^{\frac{d-1}{2}}(\partial_\rho\bm{v}_p-\mathrm{i}k_p\bm{v}_p)=0,\quad\lim\limits_{\rho\to\infty}\rho^{\frac{d-1}{2}}(\partial_\rho\bm{v}_s-\mathrm{i}k_s\bm{v}_s)=0,\quad\rho=|x|.
\end{align*}
Here, $\bm{v}_p=-\frac{1}{k_p^2}\nabla\nabla\cdot\bm{v}$ is the compressional component of $\bm{v}.$ The shear component of $\bm{v}$ is given by
\begin{align*}
\bm{v}_s=
\left\{
\begin{aligned}
\frac{1}{k_s^2}\mathbf{curl}\text{curl}\bm{v},\quad d=2,\\
\frac{1}{k_s^2}\mathbf{curl}\mathbf{curl}\bm{v},\quad d=3,
\end{aligned}
\right.
\end{align*}
where the curl operators are defined by
\begin{align*}
&\text{curl}\bm{w}=\partial_{x_1}w_2-\partial_{x_2}w_1,\quad \mathbf{curl}w=(\partial_{x_2}w,-\partial_{x_1}w)^\top,\quad d=2,\\
&\mathbf{curl}\bm{w}=(\partial_{x_2}w_3-\partial_{x_3}w_2,\partial_{x_3}w_1-\partial_{x_1}w_3,\partial_{x_1}w_2-\partial_{x_2}w_1)^\top,\quad d=3.
\end{align*}
Here $w$ is a scalar function, and $\bm{w}=(w_1,w_2)^\top$ or $(w_1,w_2,w_3)^\top$ denotes a vector function.

For any solution $\bm{v}$ of equation \eqref{eq:boundaryvalueproblem}, the Helmholtz decomposition splits it into its compressional and shear parts:
\begin{equation}\label{eq:split}
\bm{v}=
\begin{cases}
\nabla\phi+\bf{curl}\psi, & d=2,\\
\nabla\phi+\bf{curl}\bm{\psi}, & d=3,
\end{cases}
\end{equation}
where $\phi$ and $\psi$ are scalar potential functions and $\bm{\psi}$ is a vector potential function fulfilling $\nabla\cdot\bm{\psi}=0$. 

Combining \eqref{eq:boundaryvalueproblem} and \eqref{eq:split} gives the Helmholtz equations:
\begin{align}\label{eq:Helmholtz2D}
&\Delta \phi+k_p^2\phi=0,\quad\Delta\psi+k_s^2\psi=0,\ \ d=2,\\\label{eq:Helmholtz3D}
&\Delta\phi+k_p^2\phi=0,\quad\Delta\bm{\psi}+k_s^2\bm{\psi}=\bm{0},\ \ d=3.
\end{align}
In addition, $\phi$, $\psi$ and $\bm{\psi}$ are supposed to satisfy the Sommerfeld radiation condition
\begin{align}\label{eq:radiation}
\begin{aligned}
&\lim\limits_{\rho\to\infty}\rho^{\frac{d-1}{2}}(\partial_\rho\phi-\mathrm{i}k_p\phi)=0,\quad
\lim\limits_{\rho\to\infty}\sqrt{\rho}(\partial_\rho\psi-\mathrm{i}k_s\psi)=0,\\
&\lim\limits_{\rho\to\infty}\rho(\mathbf{curl}\bm{\psi}\times\hat{x}-\mathrm{i}k_s\bm{\psi})=0,
\quad\rho=|x|.
\end{aligned}
\end{align}

It has been proven in \cite{Bramble08} that there exists a unique solution $\bm{v}\in \left(H_\text{loc}^1(\mathbb{R}^d\backslash\overline{D})\right)^d$ to the direct problem \eqref{eq:boundaryvalueproblem} and \eqref{eq:radiation}.

In this paper, we take $B_R:=\{x\in\mathbb{R}^d:|x|<R\}$ containing $D$ such that $B_R\backslash\overline{D}$ is connected. For $N\in\mathbb{N}_+$, let $S:=\cup_{j=1}^N\{z_j\}\subset B_R\backslash\overline{D}$ be a set of $N$ distinct source points and $P:=\cup_{j=1}^N\{\bm{p}_j\}\subset\mathbb{S}^{d-1}$ be the set of polarization directions.
Given the incident field $\bm{u}^i(x; z_j, \bm{p}_j),\,j=1,\cdots, N,$ of the form \eqref{eq:inc}, we collect the total field $\bm{u}(x; z_j,\bm{p}_j)=\bm{u}^i(x; z_j, \bm{p}_j)+\bm{v}(x; z_j, \bm{p}_j)$ on the measurement curve $\Gamma_R:=\partial B_R=\{x\in\mathbb{R}^d:|x|=R\},$ where $\bm{v}(x; z_j, \bm{p}_j)$ is the scattered field corresponding to the incident field $\bm{u}^i(x; z_j,\bm{p}_j).$
Then, the co-inversion problem we are interested in is stated as:

\begin{problem}[Co-inversion problem]\label{prob:coinversion}
Find the obstacle $\partial D$, source points $S$ and polarization directions $P$ simultaneously from the measurements $\mathbb{U}:=\{\bm{u}(x; z,\bm{p}):x\in\Gamma_R, z\in S, \bm{p}\in P\},$ i.e.,
	\begin{equation}\label{eq:inverseSetup}
	\mathbb{U}\to(\partial D, S, P).
	\end{equation}
\end{problem}
For an illustration of the geometry setup of \cref{prob:coinversion}, we refer to \cref{fig:Setup}.
\begin{figure}
	\centering
	\begin{tikzpicture}[thick]
	\draw [gray] circle (3cm);
	\draw node at (2.4, 2.4) {$\Gamma_R$};
	
	\pgfmathsetseed{2}	
	\clip (0, 0) circle (2.7cm);
	\foreach \p in {1,...,15}
	{\fill [red] (3*rand, 3*rand) circle (0.06);
	}
	
	\pgfmathsetseed{8}			
	\draw plot [smooth cycle, samples=5, domain={1:6}] (\x*360/8+2*rnd:0.4cm+1.65cm*rnd) node at (0, -.5) {$D$}[fill=bananamania];
	\draw node at (2,.7) {$\bm{u}^i$};
	\draw node at (1.5, 1.5) {$B_R$};
	\draw [blue, domain=150:210] plot ({2.4+.5*cos(\x)}, {0.5*sin(\x)});
	\draw [blue, domain=150:210] plot ({2.2+.6*cos(\x)}, {0.7*sin(\x)});
	\draw [->] (2.2,0)--(1.4,0);
	\draw [->] (-1.5,.5)--(-2.2,.65);
	\draw [blue, thick, domain=140:200] plot ({-1.3+.5*cos(\x)}, {.5+0.5*sin(\x)});
	\draw [blue, thick, domain=140:200] plot ({-1.4+.6*cos(\x)}, {.5+0.6*sin(\x)});
	\draw node at (-2,1) {$\bm{u}$};
	
	\fill[red] (2.2,0) circle(0.06);
	\end{tikzpicture}
	\caption{Illustration of the co-inversion for imaging the obstacle and sources. }\label{fig:Setup}
\end{figure}
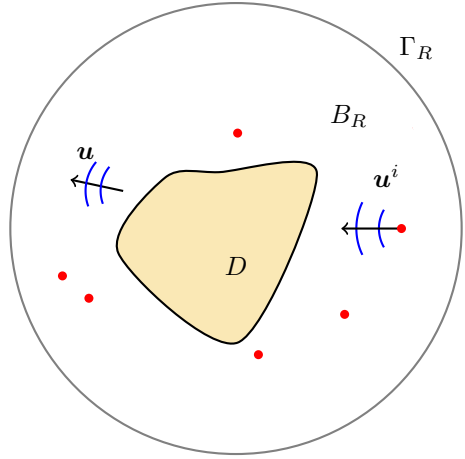

\subsection{Uniqueness}
In this subsection, we consider the uniqueness issue concerning \cref{prob:coinversion}. Specifically, we show that $S$ and $\partial D$ can be uniquely determined from the total-field measurements. We also refer to \cite{DLW20, DLW21, DLS21, HLL14} and the references therein for more studies on the uniqueness of inverse elastic scattering problems.

\begin{theorem}\label{thm:uniqueness}
	The source points $S$ can be uniquely determined by the total field $\mathbb{U}.$ 
 Let $N_0$ be defined by \eqref{eq:N}. If $N\ge N_0+1$ for one fixed angular frequency $\omega$ and one fixed polarization $\bm{p}\in\mathbb{S}^{d-1},$  then the obstacle $D$ can also be uniquely determined by the total field data $\mathbb{U}.$
\end{theorem}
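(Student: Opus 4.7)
My plan is to argue by contradiction for both assertions. Suppose two admissible triples $(\partial D^{(\ell)}, S^{(\ell)}, P^{(\ell)})$, $\ell=1,2$, generate identical measurements on $\Gamma_R$, with associated total and scattered fields $\bm{u}^{(\ell)}, \bm{v}^{(\ell)}$. Writing each $\bm{v}^{(\ell)}$ via the Helmholtz decomposition \eqref{eq:split}, the scalar/vector potentials satisfy Helmholtz equations with the Sommerfeld conditions \eqref{eq:radiation}, so the classical Rellich lemma together with unique continuation propagates the equality $\bm{v}^{(1)}=\bm{v}^{(2)}$ from $\Gamma_R$ to the unbounded component $G$ of $\mathbb{R}^d\setminus(\overline{D^{(1)}}\cup\overline{D^{(2)}})$; consequently $\bm{u}^{(1)}=\bm{u}^{(2)}$ on $G$ away from $S^{(1)}\cup S^{(2)}$.

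For the source identification I would exploit the explicit singular structure of $\mathbb{G}(\cdot,z)$ in \eqref{eq:Green}: the scattered fields are real-analytic on $G$, so $\bm{u}^{(\ell)}$ can be singular inside $G$ only at the points of $S^{(\ell)}$. If some $z\in S^{(1)}\setminus S^{(2)}$ lies in $G$, then $\bm{u}^{(1)}$ blows up at $z$ through the characteristic Hankel/Newtonian singularity while $\bm{u}^{(2)}$ stays regular there, a contradiction; the symmetric exchange then yields $S^{(1)}=S^{(2)}$. The polarization $\bm{p}_j$ at each $z_j$ is recovered from the leading singular part of $\bm{u}$ near $z_j$, since $\mathbb{G}(x,z_j)\bm{p}_j$ depends linearly and injectively on $\bm{p}_j$ through its dominant tensor singularity.

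Once $S$ and $P$ are identified, the problem reduces to an inverse obstacle problem with known scattered-field data $\{\bm{v}(x;z_j,\bm{p}):x\in\Gamma_R,\,j=1,\dots,N\}$ at the fixed $\omega$ and $\bm{p}$. Assuming $D^{(1)}\neq D^{(2)}$ and relabelling if necessary, choose $x^{\ast}\in\partial D^{(1)}$ with $x^{\ast}\notin\overline{D^{(2)}}$. The Dirichlet condition on $\partial D^{(1)}$ combined with the Rellich equality on $G$ then gives
\begin{equation*}
\mathbb{G}(x^{\ast},z_j)\bm{p}+\bm{v}^{(2)}(x^{\ast};z_j,\bm{p})=\bm{0},\qquad j=1,\dots,N,
\end{equation*}
where every $\bm{v}^{(2)}(x^{\ast};z_j,\bm{p})$ is smooth because $x^{\ast}\notin\overline{D^{(2)}}$. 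The threshold $N_0$ introduced in \eqref{eq:N} is designed precisely to bound the dimension of the space within which these $N$ identities can be simultaneously satisfied; having $N\geq N_0+1$ distinct sources therefore makes the system overdetermined and contradicts the boundary vanishing, so that $D^{(1)}=D^{(2)}$.

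The main obstacle I expect is the last step, namely converting the $N$ pointwise relations at $x^{\ast}$ into a quantitative contradiction calibrated against $N_0$. The Rellich and singularity-analysis steps are textbook, whereas the concluding argument hinges on exploiting the definition \eqref{eq:N} together with the linear independence of the fundamental-solution vectors $\{\mathbb{G}(x^{\ast},z_j)\bm{p}\}_{j=1}^N$ attached to distinct sources; this is also the only place where the single-frequency, single-polarization restrictions in the hypothesis are actively used.
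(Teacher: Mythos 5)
Your first part (uniqueness of the source points) is essentially the paper's argument: extend the equality of total fields from $\Gamma_R$ into the common exterior by uniqueness of the exterior Dirichlet problem and analyticity, then let $x$ approach a putative source $z\in S^{(1)}\setminus S^{(2)}$ and contrast the blow-up of $\mathbb{G}(x,z)\bm{p}$ with the boundedness of the other total field there. That step is fine.

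The obstacle part, however, has a genuine gap exactly where you flag it, and the gap is not a technicality that can be patched along the lines you sketch. Evaluating the Dirichlet condition at a \emph{single} point $x^{\ast}\in\partial D^{(1)}\setminus\overline{D^{(2)}}$ gives $N$ vector identities in $\mathbb{C}^d$, and no choice of $N$ makes such a pointwise system ``overdetermined'': the vectors $\mathbb{G}(x^{\ast},z_j)\bm{p}$ live in a $d$-dimensional space, so for $N>d$ they cannot be linearly independent, and $N_0$ (which by \eqref{eq:N} counts multiplicities of Dirichlet eigenvalues of $-\Delta^{*}$ in $B_R$ below $\omega^2$) has no relation to the dimension of any space these pointwise identities span. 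The paper's argument is the classical Schiffer-type eigenfunction count, which uses the vanishing of the total field on the \emph{whole boundary} of the residual domain $D^{*}=(\mathbb{R}^d\setminus G)\setminus\overline{D_2}$, not at one point: there $\bm{w}(\cdot,z_j)=\bm{u}^i(\cdot,z_j)+\bm{v}_2(\cdot,z_j)$ solves $\Delta^{*}\bm{w}+\omega^2\bm{w}=0$ in $D^{*}$ with $\bm{w}=0$ on $\partial D^{*}$, i.e.\ it is a Dirichlet eigenfunction of $-\Delta^{*}$ with eigenvalue $\omega^2$. The singularities of $\bm{u}^i(\cdot,z_j)$ at the distinct points $z_j$ (the same device you use for the sources) show that $N_0+1$ such eigenfunctions are linearly independent, while the strong monotonicity of Dirichlet eigenvalues under domain inclusion $D^{*}\subset B_R$ bounds the multiplicity of $\omega^2$ by $N_0$ — this is the step that actually uses the definition \eqref{eq:N} and yields the contradiction. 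Without replacing your pointwise relations by this eigenvalue-counting mechanism (or some equivalent), the second half of the theorem is not proved.
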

\begin{proof}
	We first prove the unique identification of the source points by contradiction.
	
	Let $D_1$ and $D_2$ be two elastically rigid obstacles such that $D_1\cup D_2\subset B_R.$ Assume $w_1\ne w_2$
	be two different source points in $S$ and denote the total fields due to $(D_1,w_1)$ and $(D_2,w_2)$ by $\bm{u}(x;D_1,w_1),$ $\bm{u}(x;D_2,w_2),$ respectively.
	Assume that
	\begin{align}\label{eq:GammaR}
	\bm{u}(x;D_1,w_1)=\bm{u}(x;D_2,w_2),\quad \forall\, x\in\Gamma_R.
	\end{align}
    From the uniqueness of the exterior Dirichlet boundary value problem for the Navier equation, we derive that
    \[
	\bm{u}(x;D_1,w_1)=\bm{u}(x;D_2,w_2),\quad \text{in }\mathbb{R}^d\backslash\overline{B_R}.
	\]
     Further, the analyticity leads to the following fact
	\[
	\bm{u}(x;D_1,w_1)=\bm{u}(x;D_2,w_2),\quad \text{in }\mathbb{R}^d\backslash\left(\overline{D_1\cup D_2}\cup\{w_1\}\cup\{w_2\}\right).
	\]
	Let $x\to w_1,$ then $\bm{u}(x;D_1,w_1)$ tends to infinity. Meanwhile, from the fact $\bm{v}(x;D_\ell,w_\ell),(\ell=1,2)$ is bounded, we know that $\bm{u}(x;D_2,w_2)$ is bounded,
	 which leads to a contradiction. Thus, $w_1=w_2.$
	
In what follows, we prove by a contradiction argument that if $N\ge N_0+1,$ then $\partial D$ can be uniquely determined by $\mathbb{U}.$  

Assume that $D_1$ and $D_2$ are two bounded domains and $\bm{v}_i,\,i=1,2$ satisfy \eqref{eq:boundaryvalueproblem} with $D$ replaced by $D_i,$ respectively. Let $G$ be the unbounded component of the complement of $D_1\cup D_2$ and the total wave vanishes on $\partial G.$ Without loss of generality, we assume that $D^*:=(\mathbb{R}^d\backslash G)\backslash\overline{D}_2\neq\emptyset.$ Then $\bm{v}_2(\cdot, z)$ satisfies
\begin{align*}
\left\{
\begin{aligned}
&\Delta^* \bm{v}_2+\omega^2 \bm{v}_2=0,&&\text{in } D^*,\\\
&\bm{v}_2(\cdot, z)=-\bm{u}^i(\cdot, z),&&\text{on }\partial D^*.
\end{aligned}
\right.
\end{align*}
Let $\bm{w}(\cdot, z)=\bm{v}_2(\cdot, z)+\bm{u}^i(\cdot,z)$ for a fixed $z\in S,$ then $\bm{w}$ satisfies
\begin{align}
\left\{
\begin{aligned}
&\Delta^*\bm{w}+\omega^2\bm{w}=0,&&\text{in }D^*,\\
&\bm{w}=0,&&\text{on }\partial D^*.
\end{aligned}
\right.
\end{align}
As a result, $\bm{w}$ is a Dirichlet eigenfunction for $-\Delta^*$ in $D^*$ with $\omega^2$ the eigenvalue. Next, we show that the eigenfunctions $\bm{w}(\cdot,{z}_j),\,z_j\in S, j=1,\cdots, N_0+1$ corresponding to the same eigenvalue $\omega^2$ are linearly independent.
Assume that
\begin{align}\label{eq:dependent}
\sum_{j=1}^{N_0+1}c_j\bm{w}(x,{z}_j)=0,\quad x\in D^*,
\end{align}
holds for some constants $c_j$ and $N_0+1$ distinct source points $z_j, j=1,\cdots, N_0+1.$ Then by analyticity, \eqref{eq:dependent} is also satisfied in the exterior of some circle containing $D_1$ and $D_2.$ For a fixed $j_0\in[1,N_0+1],$ we take $h>0$ sufficiently small such that $x_s^{j_0}=z_{j_0}+\frac{h}{s}\nu(z_{j_0}),s=1,2,\cdots$ are in a neighborhood of $z_{j_0}.$ Then,
\[
c_{j_0}\bm{w}(x_s^{j_0},{z}_{j_0})=-\sum_{\substack{j=1,j\ne j_0}}^{N_0+1}c_j\bm{w}(x_s^{j_0},{z}_j).
\]
Further, we derive that
\begin{align}\label{eq:c0}
c_{j_0}\bm{u}^i(x_s^{j_0},{z}_{j_0})=-\sum_{\substack{j=1,j\ne j_0}}^{N_0+1}c_j\bm{u}^i(x_s^{j_0},{z}_j)-\sum_{j=1}^{N_0+1}c_j\bm{v}(x_s^{j_0},z_j).
\end{align}
Noticing the fact that $\bm{u}^i(x_s^{j_0},{z}_{j_0})$ becomes unbounded and the right hand side in \eqref{eq:c0} remains bounded while $s\to\infty,$ we derive that $c_{j_0}=0$ for $j_0=1,2,\cdots, N_0+1,$ which implies that $\bm{w}(\cdot,{z}_j),\,j=1,\cdots, N_0+1,$ are linearly independent.

We can proceed with the proof in the same way as in the proof of Theorem 5.2 in \cite{CK19}. Let $0<\lambda_1\le\lambda_2\le\cdots\le\lambda_m=\omega^2$ be the Dirichlet eigenvalues of $D^*$ that are smaller than or equal to $\omega^2$ and $\mu_1\le\mu_2\le\cdots\mu_m$ are the first $m$ eigenvalues of $B_R,$ then $\mu_m<\lambda_m=\omega^2.$ Based on the strong monotonicity property for the Dirichlet eigenvalues of $-\Delta^*,$ we obtain that the multiplicity $M$ of $\lambda_m$ is less than or equal to the sum of multiplicities of eigenvalues for the disk $B_R$ which are less than $\omega^2.$ In other words, $M\le N_0,$ which  contradicts with the fact that $N_0+1$ distinct incident waves yield $N_0+1$ linearly independent eigenfunctions with eigenvalue $\omega^2$ for $D^*$. Hence, $D_1=D_2.$
\end{proof}


\section{Recovering the source}\label{sec:DSM}
In this section, we develop several novel indicator functionals to determine the source points $S$ and the polarization directions $P$ from the total field $\mathbb{U}$. It deserves noting that, different from the existing direct sampling methods for the inverse source problem, we adopt the total field instead of the incident field in the imaging function. For convenience, we use $(\cdot,\cdot)$ for the real inner product on $\mathbb{C}^2$ and the overbar for the complex conjugate, and use $\langle\cdot,\cdot\rangle$ for the inner product defined on $(L^2(\Gamma_R))^2$.

\subsection{Recovering the location}\label{subsec:location}
The aim of this subsection is to determine the locations of source points from $\mathbb{U}.$
Let $\Omega\subset\mathbb{R}^d$ be a bounded sampling domain such that $D\cup S\subset\Omega.$ For any sampling point $y\in\Omega, $ we define $N$ indicator functionals as follows:
\begin{equation}\label{eq:I}
I_j^{\bm{q}}(y)=\left\langle\bm{u}(\cdot; z_j), \mathbb{G}(\cdot, y)\bm{q}\right\rangle,\quad j=1,\cdots, N,
\end{equation}
where $\bm{q}\in\mathbb{S}^{d-1}.$ We would like to point out that, to determine the source points $S,$ the auxiliary polarization $\bm{q}$ is not necessarily the same as $\bm{p}_j.$

To investigate the characteristics of the indicator functions \eqref{eq:I}, several crucial lemmas are needed.
\begin{lemma}{$\cite[Lemma~3.4]{RTMChen}$}\label{lem:GG}
	For all $y, z\in B_R,$ $y\ne z,$ it holds that
	\begin{align*}
	\omega c_\alpha \left\langle\mathbb{G}_\alpha(x, z),\mathbb{G}_\alpha(x, y)\right\rangle&=\Im\{\mathbb{G}_\alpha(y, z)\}+\mathbb{W}_\alpha^r(y, z),\quad \alpha=p, s,\\
	\omega\left\langle\mathbb{G}_p(x, z), \mathbb{G}_s(x, y) \right\rangle & = \mathbb{W}_{ps}^r(y, z) ,\\
	\omega\left\langle\mathbb{G}_s(x, z), \mathbb{G}_p(x, y) \right\rangle & =\mathbb{W}_{sp}^r(y, z) ,
	\end{align*}
	where $\|\mathbb{W}_\alpha^r\|_{L^\infty(B_R\times B_R)}+\|\nabla_x\mathbb{W}_\alpha^r\|_{L^\infty(B_R\times B_R)}\le CR^{-\frac{d-1}{2}}$ holds uniformly with $\alpha\in\{p, s, ps, sp\}.$ Here,  $\|A\|_{L^\infty(B_R\times B_R)}=\max\limits_{i, j=1,\cdots, d}\|A_{ij}\|_{L^\infty(B_R\times B_R)}$ for  $A(x, y)=(A_{ij}(x, y))\in\mathbb{C}^{d\times d} (i, j=1,\cdots, d).$  
\end{lemma}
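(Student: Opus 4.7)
The result is a Helmholtz--Kirchhoff type identity for the elastic Green tensor, so my plan is to reduce it to the scalar Helmholtz--Kirchhoff identity for $\Phi_\alpha$ and then upgrade it using the representation \eqref{eq:GpGs}. In the scalar setting, $\Phi_\alpha(\cdot,y)$ and $\Phi_\alpha(\cdot,z)$ satisfy the Helmholtz equation with wavenumber $k_\alpha$ away from $y,z$. Applying Green's second identity on $B_R\setminus(B_\epsilon(y)\cup B_\epsilon(z))$ and letting $\epsilon\to 0$ gives
\[
\int_{\Gamma_R}\!\left(\Phi_\alpha(x,y)\partial_\nu\overline{\Phi_\alpha(x,z)}-\overline{\Phi_\alpha(x,z)}\partial_\nu\Phi_\alpha(x,y)\right)\mathrm{d}s(x)=-2\mathrm{i}\,\Im\Phi_\alpha(y,z).
\]
Substituting the Sommerfeld asymptotic $\partial_\nu\Phi_\alpha(x,\cdot)=\mathrm{i}k_\alpha\Phi_\alpha(x,\cdot)+O(R^{-(d+1)/2})$ converts the left-hand side into $-2\mathrm{i}k_\alpha\langle\Phi_\alpha(\cdot,z),\Phi_\alpha(\cdot,y)\rangle$ plus a controlled error, producing the scalar identity $k_\alpha\langle\Phi_\alpha(\cdot,z),\Phi_\alpha(\cdot,y)\rangle=\Im\Phi_\alpha(y,z)+W^r_\alpha(y,z)$ with $W^r_\alpha$ of size $R^{-(d-1)/2}$.

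To upgrade this to the tensor case I would use that $\Phi_\alpha(x,y)$ depends on $x$ only through $x-y$, so each $\nabla_x$ acting on $\Phi_\alpha(x,y)$ can be traded for $-\nabla_y$ (analogously for $z$). Writing $\mathbb{G}_\alpha(x,y)=L_\alpha^{(x)}\Phi_\alpha(x,y)$ with $L_\alpha$ the constant-coefficient operator read off \eqref{eq:GpGs}, the matrix inner product $\langle\mathbb{G}_\alpha(\cdot,z),\mathbb{G}_\alpha(\cdot,y)\rangle$ becomes the scalar integral above with constant-coefficient differential operators in $(y,z)$ pulled outside. Combining this with the scalar identity and reconciling the prefactors via $\omega=c_\alpha k_\alpha$, $c_s^2=\mu$, $c_p^2=\lambda+2\mu$ produces the stated identity $\omega c_\alpha\langle\mathbb{G}_\alpha(\cdot,z),\mathbb{G}_\alpha(\cdot,y)\rangle=\Im\mathbb{G}_\alpha(y,z)+\mathbb{W}^r_\alpha(y,z)$.

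For the cross terms I would exploit that the far-field profile of $\mathbb{G}_p$ on $\Gamma_R$ is longitudinal ($\mathbb{G}_p\sim c\,\hat x\hat x^\top\,\Phi_p$, since $\nabla_x\nabla_x^\top$ applied to $\Phi_p$ produces $-k_p^2\hat x\hat x^\top$ at leading order) while that of $\mathbb{G}_s$ is transverse ($\mathbb{G}_s\sim c'(\mathbb{I}-\hat x\hat x^\top)\Phi_s$). Since $\hat x\hat x^\top(\mathbb{I}-\hat x\hat x^\top)=0$, the leading contribution to the integrand of $\langle\mathbb{G}_p(\cdot,z),\mathbb{G}_s(\cdot,y)\rangle$ on $\Gamma_R$ vanishes identically, and the whole integral is already the remainder term $\mathbb{W}^r_{ps}$ (similarly $\mathbb{W}^r_{sp}$).

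The main technical hurdle is the uniform control of both $\mathbb{W}^r_\alpha$ and its $x$-derivative. One must carry at least one extra order in the Sommerfeld expansion of $\Phi_\alpha$ and check that differentiating the remainder in $x$ preserves the $R^{-(d-1)/2}$ decay; this is where the $\nabla_x\nabla_x^\top$ factors embedded in $\mathbb{G}_\alpha$ make the bookkeeping delicate. Once the asymptotics are written with explicit directional factors $\mathrm{i}k_\alpha\hat x$ for every $\nabla_x$, the needed estimates reduce to elementary surface-integral bounds that are uniform in $y,z\in B_R$.
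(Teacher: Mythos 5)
The paper does not actually prove this lemma: it is imported verbatim from \cite[Lemma~3.4]{RTMChen}, so there is no in-paper argument to compare yours against. Judged on its own, your sketch is a faithful reconstruction of the standard route to such elastic Helmholtz--Kirchhoff identities: the scalar identity $k_\alpha\langle\Phi_\alpha(\cdot,z),\Phi_\alpha(\cdot,y)\rangle=\Im\Phi_\alpha(y,z)+O(R^{-(d-1)/2})$ via Green's second identity and the Sommerfeld expansion; the longitudinal/transverse far-field profiles $\mathbb{G}_p\sim(\lambda+2\mu)^{-1}\hat{x}\hat{x}^\top\Phi_p$ and $\mathbb{G}_s\sim\mu^{-1}(\mathbb{I}_d-\hat{x}\hat{x}^\top)\Phi_s$, whose mutual orthogonality kills the cross terms at leading order; and the prefactors reconciling through $\mu k_s^2=\omega^2$ to give $1/(\omega c_\alpha)$. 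Three points would need care in a full write-up. First, your two mechanisms for the diagonal terms are not interchangeable without comment: after trading $\nabla_x$ for $-\nabla_y$ and $-\nabla_z$, four derivatives in $y,z$ land on the scalar remainder $W^r_\alpha$, and one must differentiate its explicit surface-integral representation (not merely its bound) to check that each $y$- or $z$-derivative preserves the $R^{-(d-1)/2}$ decay; relatedly, the derivatives to be controlled live in $y$ and $z$ rather than $x$ (the $\nabla_x\mathbb{W}_\alpha^r$ in the statement refers to the first argument of $\mathbb{W}_\alpha^r(y,z)$). Second, the step that actually produces $\Im\mathbb{G}_\alpha(y,z)$ on the right-hand side deserves to be made explicit: the contracted pair of derivatives gives $\sum_m\partial_{z_m}\partial_{y_m}\Im\Phi_\alpha=-\Delta_y\Im\Phi_\alpha=k_\alpha^2\Im\Phi_\alpha$, which is legitimate because $\Im\Phi_\alpha$ is an entire solution of the Helmholtz equation. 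Third, the uniformity claimed over all $y,z\in B_R$ is delicate, since your expansion $\partial_\nu\Phi_\alpha=\mathrm{i}k_\alpha\Phi_\alpha+O(R^{-(d+1)/2})$ is uniform only for $y,z$ in a fixed compact set as $R\to\infty$, which is the regime in which \Cref{lem:GG} is actually invoked in \Cref{sec:DSM}. None of these affects the correctness of your outline.
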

 
For the trace of Green tensors, we have the following property.
\begin{lemma}\label{lem:trace}
For $d=2,3$,  it holds that
\begin{equation}\label{eq:trace}
{\rm tr}(\mathbb{G}_\alpha)=C_\alpha\Phi_\alpha, \quad \alpha=p, s,
\end{equation}
where ${\rm tr}$ denotes the sum of the diagonal terms and
$$
C_\alpha=
\begin{cases}
\dfrac{1}{{\lambda+2\mu}}, & \alpha=p,\vspace{2mm} \\ 
\dfrac{d-1}{\mu}, & \alpha=s.
\end{cases}
$$
\end{lemma}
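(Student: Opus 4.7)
The plan is to read off the traces directly from the explicit formulas in \eqref{eq:GpGs}, using two elementary facts. First, for any smooth scalar function $f$, $\mathrm{tr}(\nabla_x\nabla_x^\top f) = \Delta_x f$, so the trace of a Hessian collapses to a scalar Laplacian. Second, away from the singularity $x=z$, each scalar fundamental solution $\Phi_\alpha$ satisfies the homogeneous Helmholtz equation $\Delta_x \Phi_\alpha + k_\alpha^2\Phi_\alpha = 0$.

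For the compressional part, I would apply these facts to the formula $\mathbb{G}_p = -\frac{1}{\mu k_s^2}\nabla_x\nabla_x^\top\Phi_p$ to obtain $\mathrm{tr}(\mathbb{G}_p) = -\frac{1}{\mu k_s^2}\Delta_x\Phi_p = \frac{k_p^2}{\mu k_s^2}\Phi_p$. Substituting $k_p^2 = \omega^2/(\lambda+2\mu)$ and $k_s^2 = \omega^2/\mu$ then collapses the prefactor to $1/(\lambda+2\mu)$, independently of the dimension, yielding $C_p$.

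For the shear part, I would split the trace as $\mathrm{tr}(\mathbb{G}_s) = \frac{1}{\mu}\bigl(\mathrm{tr}(\mathbb{I}_d)\Phi_s + \frac{1}{k_s^2}\Delta_x\Phi_s\bigr) = \frac{1}{\mu}(d\Phi_s - \Phi_s)$, using $\mathrm{tr}(\mathbb{I}_d) = d$ and once more $\Delta_x\Phi_s = -k_s^2\Phi_s$. This produces the constant $C_s = (d-1)/\mu$ and makes transparent why the dimension enters only through the shear coefficient while the compressional one is dimension-free.

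There is no real obstacle here; the statement is essentially an algebraic consequence of \eqref{eq:GpGs} combined with the scalar Helmholtz equation. The only point worth flagging is that both identities are asserted pointwise off the source, which matches the status of $\Phi_\alpha$ as a distributional fundamental solution, so no trace theorem or regularization argument is needed.
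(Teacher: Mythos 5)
Your proof is correct, and it takes a genuinely more economical route than the paper's. The paper proves the lemma by writing out the full Hessian $\nabla_x\nabla_x^\top f_0(k_\alpha|x-y|)$ in terms of the (spherical) Hankel functions $f_0, f_1, f_2$, obtaining explicit closed forms for $\mathbb{G}_p$ and $\mathbb{G}_s$ as combinations of $f_1, f_2$ and the rank-one tensor $(x-y)\otimes(x-y)/|x-y|^2$, and then collapsing the traces via the recurrence $d\,f_1(z)/z = f_0(z)+f_2(z)$. You bypass all of the special-function machinery by observing that $\mathrm{tr}(\nabla_x\nabla_x^\top f)=\Delta_x f$ and that $\Delta_x\Phi_\alpha=-k_\alpha^2\Phi_\alpha$ away from the source, which reduces the whole computation to the algebra $k_p^2/(\mu k_s^2)=1/(\lambda+2\mu)$ and $\mathrm{tr}(\mathbb{I}_d)-1=d-1$; this also makes it transparent why $C_p$ is dimension-free while $C_s$ is not, and it handles $d=2$ and $d=3$ uniformly without case distinctions. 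What the paper's longer calculation buys is the explicit componentwise form of $\mathbb{G}_p$ and $\mathbb{G}_s$, which could in principle be of independent use, but it is not needed for the trace identity itself, so your argument is a clean simplification with no gap.
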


\begin{proof}
Define
$$
f_n=
\begin{cases}
H_n^{(1)}, & d=2,\\
h_n^{(1)}, & d=3,
\end{cases}
\quad n=0,1,2,\cdots,
$$
and
$$
\sigma_\alpha=
\begin{cases}
\pi, & d=2,\\
k_\alpha, & d=3,
\end{cases}
\quad \alpha=p, s.
$$
Then we see that
\begin{align}
\label{eq:fn} f'_n(z) & =\frac{n}{z}f_n(z)-f_{n+1}(z),\quad n=0,1,2,\cdots,\\
\label{eq:f1} d f_1(z)/{z}&=f_0(z)+f_2(z),\quad d=2,3,\\
\label{eq:phi_alpha}\Phi_\alpha(x, y) & =
\frac{\mathrm{i}\sigma_\alpha}{4\pi}f_0(k_\alpha|x-y|),\quad \alpha=p, s.
\end{align}

By \eqref{eq:fn}, it can be straightforwardly derived that
$$
\nabla_x\nabla_x^\top f_0(k_\alpha|x-y|)=
k_\alpha^2f_2(k_\alpha|x-y|)\frac{(x-y)\otimes(x-y)}{|x-y|^2}-\frac{k_\alpha}{|x-y|}f_1(k_\alpha|x-y|)\mathbb{I}, \quad \alpha=p, s,
$$
where $\otimes$ denotes the outer product. Hence, together with \eqref{eq:GpGs} and \eqref{eq:phi_alpha}, the Green tensors can be rewritten as
\begin{align*}
&\mathbb{G}_p(x, y)=-\frac{\mathrm{i}\sigma_p}{4\pi c_p^2}\left(
f_2(k_p|x-y|)\frac{(x-y)\otimes(x-y)}{|x-y|^2}-\frac{f_1(k_p|x-y|)}{k_p|x-y|}\mathbb{I}
\right),\\
&\mathbb{G}_s(x, y)=
\frac{\mathrm{i}\sigma_s}{4\pi c_s^2}\left[
\left(f_0(k_s|x-y|)-\frac{f_1(k_s|x-y|)}{k_s|x-y|}\right)\mathbb{I}+f_2(k_s|x-y|)\frac{(x-y)\otimes(x-y)}{|x-y|^2}
\right].
\end{align*}
Further, using \eqref{eq:f1}, we obtain that
\begin{align*}
{\rm tr}\left(\mathbb{G}_p(x, y)\right)&=-\frac{\mathrm{i}\sigma_p}{4\pi c_p^2}\left(f_2(k_p|x-y|)-\frac{df_1(k_p|x-y|)}{k_p|x-y|}\right)
=\frac{\mathrm{i}\sigma_p}{4\pi c_p^2}f_0(k_p|x-y|)
=\frac{1}{c_p^2}\Phi_p(x, y),\\
{\rm tr}\left(\mathbb{G}_s(x, y)\right)&=\frac{\mathrm{i}\sigma_s}{4\pi c_s^2}\left(df_0(k_s|x-y|)-\frac{df_1(k_s|x-y|)}{k_s|x-y|}+f_2(k_s|x-y|)\right)\\
&=\frac{\mathrm{i}\sigma_s(d-1)}{4\pi c_s^2}f_0(k_s|x-y|)\\
&=\frac{d-1}{c_s^2}\Phi_s(x, y),
\end{align*}
which completes the proof. 
\end{proof}

To analyze the indicating behavior of the indicator \eqref{eq:I}, we rewrite \eqref{eq:I} into two parts as follows:
$$
I_j^{\bm{q}}(y)=I_j^{S,\bm{q}}(y)+I_j^{D,\bm{q}}(y),
$$
where 
\begin{align}
\label{eq:IjS}I_j^{S,\bm{q}}(y)&=\left\langle\bm{u}^i(\cdot, z_j),\mathbb{G}(\cdot, y)\bm{q}\right\rangle,\\
\label{eq:IjD}I_j^{D,\bm{q}}(y)&=\left\langle\bm{v}(\cdot, z_j),\mathbb{G}(\cdot, y)\bm{q}\right\rangle.
\end{align}

\begin{proposition}
	Let $I_j^{S,\bm{q}}(y), j=1,\cdots, N,$ be defined by \eqref{eq:IjS}. Then we have
	\begin{align*}
	I_j^{S,\bm{q}}(y)=\sum_{\alpha\in\{p, s\}}\frac{1}{\omega c_\alpha}\Im\big\{\big(\bm{p}_j,\mathbb{G}_\alpha(y, z_j)\bm{q}\big)\big\}+\mathcal{O}\left(R^{-\frac{d-1}{2}}\right),\quad R\to\infty.
	\end{align*}
\end{proposition}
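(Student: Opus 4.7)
The plan is to substitute the explicit form \eqref{eq:inc} of the incident field, $\bm{u}^i(\cdot; z_j, \bm{p}_j) = \mathbb{G}(\cdot, z_j)\bm{p}_j$, into \eqref{eq:IjS}, decompose $\mathbb{G} = \mathbb{G}_p + \mathbb{G}_s$ on both sides of the inner product, and apply \Cref{lem:GG} term-by-term. After the substitution, the sesquilinearity of $\langle\cdot,\cdot\rangle$ and bilinearity of $(\cdot,\cdot)$ yield a four-term sum
\begin{align*}
I_j^{S,\bm{q}}(y) = \sum_{\alpha,\beta\in\{p,s\}} \left\langle \mathbb{G}_\alpha(\cdot, z_j)\bm{p}_j,\; \mathbb{G}_\beta(\cdot, y)\bm{q}\right\rangle.
\end{align*}

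The next step is to pull the (constant, real) polarization vectors $\bm{p}_j$ and $\bm{q}$ out of the $L^2(\Gamma_R)$ pairing. Using the symmetry of each Green tensor, $\mathbb{G}_\alpha(x,\cdot) = \mathbb{G}_\alpha(x,\cdot)^\top$, each scalar pairing above can be rewritten as the quadratic form
\begin{align*}
\left\langle \mathbb{G}_\alpha(\cdot, z_j)\bm{p}_j,\; \mathbb{G}_\beta(\cdot, y)\bm{q}\right\rangle = \bigl(\bm{p}_j,\; \mathbb{M}_{\alpha\beta}(y,z_j)\,\bm{q}\bigr),
\end{align*}
where $\mathbb{M}_{\alpha\beta}(y,z_j)$ is precisely the tensor-valued pairing $\langle \mathbb{G}_\alpha(\cdot,z_j), \mathbb{G}_\beta(\cdot,y)\rangle$ appearing on the left-hand side of \Cref{lem:GG}. \Cref{lem:GG} then evaluates all four $\mathbb{M}_{\alpha\beta}$: the ``diagonal'' cases $\alpha=\beta$ produce the principal contribution $\tfrac{1}{\omega c_\alpha}\Im\{\mathbb{G}_\alpha(y,z_j)\}$ plus a remainder $\tfrac{1}{\omega c_\alpha}\mathbb{W}_\alpha^r(y,z_j)$, while the ``off-diagonal'' cases $\alpha\neq\beta$ consist entirely of remainders $\tfrac{1}{\omega}\mathbb{W}_{ps}^r(y,z_j)$ or $\tfrac{1}{\omega}\mathbb{W}_{sp}^r(y,z_j)$.

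Finally, I would use the uniform $L^\infty$-bound of \Cref{lem:GG} to absorb all four matrix-valued remainders into a single scalar $\mathcal{O}(R^{-(d-1)/2})$ term, with the implicit constant independent of $y, z_j \in B_R$ and controlled by $|\bm{p}_j|=|\bm{q}|=1$. Since $\bm{p}_j$ and $\bm{q}$ are real, $\Im$ commutes with the bilinear pairing $(\cdot,\cdot)$, so we arrive at the claimed asymptotic. The only subtle point — and hence the main obstacle — is the bookkeeping in the second step: verifying that after pulling the real polarization vectors outside the $L^2$ pairing one obtains exactly the matrix-valued inner product stated in \Cref{lem:GG}, which relies on the symmetry $\mathbb{G}_\alpha^\top = \mathbb{G}_\alpha$ inherited from the Hessian structure in \eqref{eq:GpGs}. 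Once this identification is made, the result follows as a direct consequence of \Cref{lem:GG}.
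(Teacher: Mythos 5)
Your proposal is correct and follows essentially the same route as the paper: substitute $\bm{u}^i(\cdot;z_j,\bm{p}_j)=\mathbb{G}(\cdot,z_j)\bm{p}_j$, split $\mathbb{G}=\mathbb{G}_p+\mathbb{G}_s$ into the four cross terms, apply \Cref{lem:GG} so that the diagonal terms give the principal part and all remainders are absorbed into $\mathcal{O}(R^{-(d-1)/2})$ via the uniform bound. The only difference is that you make explicit the bookkeeping step of pulling the real unit polarizations out of the $L^2(\Gamma_R)$ pairing via the symmetry $\mathbb{G}_\alpha^\top=\mathbb{G}_\alpha$, which the paper performs silently.
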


\begin{proof}
	To see this property, we find that $j=1,\cdots, N,$ and $y\in\Omega,$
	\begin{align*}
	&\left\langle\mathbb{G}(\cdot, z_j)\bm{p}_j,\mathbb{G}(\cdot, y)\bm{q}\right\rangle\\
	=&\sum_{\alpha\in\{p, s\}} \left\langle\mathbb{G}_\alpha(x, z_j)\bm{p}_j, \mathbb{G}_\alpha(x, y)\bm{q}\right\rangle+\left\langle\mathbb{G}_p(x, z_j)\bm{p}_j, \mathbb{G}_s(x, y)\bm{q}\right\rangle+\big\langle\mathbb{G}_s(x, z_j)\bm{p}_j, \mathbb{G}_p(x, y)\bm{q}\big\rangle \\
	= & \sum_{\alpha\in\{p, s\}} \frac{1}{\omega c_\alpha}\Im\big\{\big(\bm{p}_j,\mathbb{G}_\alpha(y, z_j)\bm{q}\big)\big\}+\frac{1}{\omega}\left(\frac{(\bm{p}_j,\mathbb{W}_p^r\bm{q})}{c_p}+\frac{(\bm{p}_j,\mathbb{W}_s^r\bm{q})}{c_s}+(\bm{p}_j,\mathbb{W}_{ps}^r\bm{q})+(\bm{p}_j,\mathbb{W}_{s p}^r\bm{q})\right) \\
	=&\sum_{\alpha\in\{p, s\}}\frac{1}{\omega c_\alpha}\Im\big\{\big(\bm{p}_j,\mathbb{G}_\alpha(y, z_j)\bm{q}\big)\big\}+\mathcal{O}\left(R^{-\frac{d-1}{2}}\right),
	\end{align*}
	which completes our proof.
\end{proof} 

Let $J_0$ be the Bessel function of order zero. From \Cref{lem:trace}, we know that the crucial quantity
\begin{align*}
\frac{1}{\omega c_\alpha}\Im\{{\rm tr}(\mathbb{G}_\alpha(y, z_j))\}=\left\{
\begin{aligned}
&\frac{C_\alpha}{4\omega c_\alpha}J_0(k_\alpha|y-z_j|),&d=2,\\
&\frac{C_\alpha}{{4\pi}\omega c_\alpha}\frac{\sin(k_\alpha|y-z_j|)}{|y-z_j|},&d=3,
\end{aligned}
\right.
\end{align*}
obtains its significant value at $y=z_j, j=1,\cdots, N.$ Otherwise, this term is relatively small, which implies that the indicator functional proposed in \eqref{eq:IjS} can indicate the presence of source points.

To analyze the indicating behaviors of \eqref{eq:IjD}, we notice that through the single-layer representation, the  scattered field can be given by
\begin{align}\label{eq:scattered}
	\bm{v}_j(x)=(\mathcal{S}_1\bm{\varphi}_j)(x)=\int_{\partial D}\mathbb{G}(x, w)\bm{\varphi}_j(w)\mathrm{d}s(w),
\end{align}
with $\bm{\varphi}_j\in(L^2(\Lambda))^d$ the density corresponding to the $j$-th source point $z_j.$
	
\begin{proposition}
	Let $I_j^{D,\bm{q}}, j=1,2,\cdots, N,$ be defined by \eqref{eq:IjD}, then it holds that:
	\begin{equation}
	I_j^{D,\bm{q}}(y)=\int_{\partial D}\bm{\varphi}_j(w)\cdot\bm{q}\sum_{\alpha\in\{p, s\}}\frac{1}{\omega c_\alpha}\Im(\mathbb{G}_\alpha(w, y))\mathrm{d}s(w)+\mathcal{O}\left(R^{-\frac{d-1}{2}}\right).
	\end{equation}
\end{proposition}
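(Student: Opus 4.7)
The plan is to mimic the proof of the preceding proposition but inside an additional integration over $\partial D$. First I would substitute the single-layer ansatz \eqref{eq:scattered} into \eqref{eq:IjD} and interchange integrals by Fubini:
\begin{align*}
I_j^{D,\bm{q}}(y)
&=\int_{\Gamma_R}\int_{\partial D}\mathbb{G}(x,w)\bm{\varphi}_j(w)\cdot\overline{\mathbb{G}(x,y)\bm{q}}\,\mathrm{d}s(w)\,\mathrm{d}s(x)\\
&=\int_{\partial D}\big\langle\mathbb{G}(\cdot,w)\bm{\varphi}_j(w),\,\mathbb{G}(\cdot,y)\bm{q}\big\rangle\,\mathrm{d}s(w).
\end{align*}
The exchange is justified because $\bm{\varphi}_j\in(L^2(\partial D))^d$, $\partial D$ has finite surface measure, and the kernel is uniformly bounded on $\Gamma_R\times\partial D$ since $\partial D$ sits a positive distance inside $\Gamma_R$ and $y\in\Omega$ is separated from $\partial D$ almost everywhere.

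Next I would apply \Cref{lem:GG} to each $L^2(\Gamma_R)$ inner product under the integral, with the pair $(\bm{p}_j,z_j)$ from the previous proposition replaced by $(\bm{\varphi}_j(w),w)$ where $w\in\partial D\subset B_R$. The decomposition $\mathbb{G}=\mathbb{G}_p+\mathbb{G}_s$ and the handling of the four resulting pieces ($\langle\mathbb{G}_p,\mathbb{G}_p\rangle$, $\langle\mathbb{G}_s,\mathbb{G}_s\rangle$, $\langle\mathbb{G}_p,\mathbb{G}_s\rangle$, $\langle\mathbb{G}_s,\mathbb{G}_p\rangle$) are identical to the preceding calculation and produce
\begin{align*}
\big\langle\mathbb{G}(\cdot,w)\bm{\varphi}_j(w),\,\mathbb{G}(\cdot,y)\bm{q}\big\rangle
=\sum_{\alpha\in\{p,s\}}\frac{1}{\omega c_\alpha}\Im\big\{\big(\bm{\varphi}_j(w),\,\mathbb{G}_\alpha(w,y)\bm{q}\big)\big\}+r(w,y),
\end{align*}
where I have already used the elementary symmetry $\mathbb{G}_\alpha(y,w)=\mathbb{G}_\alpha(w,y)$, which is visible from \eqref{eq:GpGs} because the Hessian of a radial function is even in $x-y$ and $\mathbb{I}_d$ is constant. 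Integrating the leading term over $w\in\partial D$ reproduces the displayed formula.

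The only genuine obstacle is controlling the remainder $r(w,y)$ after integration. Using the uniform bound $\|\mathbb{W}_\alpha^r\|_{L^\infty(B_R\times B_R)}\le CR^{-(d-1)/2}$ supplied by \Cref{lem:GG}, I obtain the pointwise estimate $|r(w,y)|\le CR^{-(d-1)/2}|\bm{\varphi}_j(w)|\,|\bm{q}|$, whence by Cauchy--Schwarz
\begin{align*}
\bigg|\int_{\partial D}r(w,y)\,\mathrm{d}s(w)\bigg|\le CR^{-\frac{d-1}{2}}|\partial D|^{1/2}\|\bm{\varphi}_j\|_{L^2(\partial D)}=\mathcal{O}\!\left(R^{-\frac{d-1}{2}}\right),
\end{align*}
which preserves the stated order. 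Apart from this bookkeeping, Fubini, and careful tracking of argument order in \Cref{lem:GG}, no new analytic input beyond the previous proposition is needed.
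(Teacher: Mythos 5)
Your argument is essentially the paper's own proof: substitute the single-layer representation \eqref{eq:scattered} into \eqref{eq:IjD}, interchange the $\Gamma_R$ and $\partial D$ integrals, apply \Cref{lem:GG} to the four pieces arising from $\mathbb{G}=\mathbb{G}_p+\mathbb{G}_s$, and absorb the $\mathbb{W}$-remainders into the $\mathcal{O}\left(R^{-\frac{d-1}{2}}\right)$ term after integrating over $\partial D$ --- your extra care with Fubini and the Cauchy--Schwarz bound merely makes explicit what the paper leaves implicit. The only quibble is that, since the density $\bm{\varphi}_j$ is complex-valued, the imaginary part delivered by \Cref{lem:GG} attaches to the Green tensor alone, so the leading term should be written $\bm{\varphi}_j(w)\cdot\bm{q}\,\Im(\mathbb{G}_\alpha(w,y))$ as in the stated formula rather than $\Im\big\{\big(\bm{\varphi}_j(w),\mathbb{G}_\alpha(w,y)\bm{q}\big)\big\}$; this does not affect the structure of the argument.
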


\begin{proof}
       Substituting \eqref{eq:scattered} into \eqref{eq:IjD} derives that
	$$
	I_j^{D,\bm{q}}(y)=\left\langle\bm{v}_j(x), \mathbb{G}(x, y)\bm{q}\right\rangle
	=\int_{\partial D}\bm{\varphi}_j(w)\cdot\bm{q}\mathrm{d}s(w)\left\langle\mathbb{G}(x, w), \mathbb{G}(x, y)\right\rangle.
	$$
	Noticing \Cref{lem:GG}, we obtain that
	\begin{align*}
	\left\langle\mathbb{G}(x, w), \mathbb{G}(x, y)\right\rangle
	& = \left\langle\mathbb{G}_p(x, w)+\mathbb{G}_s(x, w), \mathbb{G}_p(x, y)+\mathbb{G}_s(x, y)\right\rangle \\
	& = \left\langle\mathbb{G}_p(x, w), \mathbb{G}_p(x, y)\right\rangle
	     +\left\langle\mathbb{G}_s(x, w), \mathbb{G}_s(x, y)\right\rangle \\
	&\quad +\left\langle\mathbb{G}_p(x, w), \mathbb{G}_s(x, y)\right\rangle
	+\left\langle\mathbb{G}_s(x, w), \mathbb{G}_p(x, y)\right\rangle \\
	& =\frac{1}{\omega c_p}\Im\{\mathbb{G}_p(w, y)\}+\frac{1}{\omega c_s}\Im\{\mathbb{G}_s(w, y)\}+\mathbb{W}(w, y),
	\end{align*}
	where 
	$$
	\mathbb{W}=\frac{1}{\omega}\left(\frac{\mathbb{W}_p^r}{c_p}+\frac{\mathbb{W}_s^r}{c_s}+\mathbb{W}_{ps}^r+\mathbb{W}_{sp}^r\right).
	$$
	Further, 
	\begin{align*}
	I_j^{D,\bm{q}}(y)&=\int_{\partial D}\bm{\varphi}_j(w)\cdot\bm{q}\left(\sum_{\alpha\in\{p, s\}}\frac{1}{\omega c_\alpha}\Im(\mathbb{G}_\alpha(w, y))+\mathbb{W}(w, y)\right)\mathrm{d}s(w)\\
	&=\int_{\partial D}\bm{\varphi}_j(w)\cdot\bm{q}\sum_{\alpha\in\{p, s\}}\frac{1}{\omega c_\alpha}\Im(\mathbb{G}_\alpha(w, y))\mathrm{d}s(w)+\mathcal{O}\left(R^{-\frac{d-1}{2}}\right),
	\end{align*}
	which completes the proof.
	\end{proof}
	From \Cref{lem:trace}, we know that $I^{D,\bm{q}}_j(y)$ attains its maximum at $y=w\in\partial D,$ which indicates the presence of the obstacle.
	
Combining the above analysis, we know that by choosing proper polarization $\bm{q}$, the indicator functional \Cref{eq:I} can indicate the presence of the source points and the obstacle. As will be seen in later numerical experiments, the indicator functional in \Cref{eq:I} identifies the source points accurately, while the obstacle can not be recognized properly since only a single source point is involved. 

\subsection{Identifying the polarization directions}\label{subsec:polarization}
In \Cref{subsec:location}, we point out that the choice of the polarization $\bm{q}$ in the indicating function $I_j^{\bm{q}}$ need not be the same as $\bm{p}_j.$ Nevertheless, it is the source location $z_j$ and the polarization $\bm{p}_j$ that determine the incident field $\bm{u}^i(\cdot; z_j,\bm{p}_j)$ collectively. In other words, only the source location can not determine the incident field. Therefore, we also need to determine the polarizations $\bm{p}_j$ from $\mathbb{U}$.

For convenience, we only consider the case for $d=2$. Let $[0,\pi)$ be an angular sampling interval. Choose an angle $\theta_1\in[0,\frac{\pi}{2})$ randomly and let $\theta_2=\pi-\theta_1.$ Compute
\begin{align}
\label{eq:Ii}
I_j^{\bm{q}_i}(y)=\left\langle\bm{u}(\cdot; z_j), \mathbb{G}(\cdot, y)\bm{q}_i\right\rangle,\quad j=1,\cdots, N,
\end{align}
with the polarization $\boldsymbol{q}_i=(\cos\theta_i,\sin\theta_i),\,i=1,2,$ and collect all the maximizer $\tilde{z}_j^i$ of each indicator function $|I_j^{\bm{q}_i}(y)|,\,i=1,2,j=1,2,\cdots, N,$ which can be viewed as the reconstructed source points.

Now, for each $j=1,2,\cdots, N,$ we have obtained two reconstructed source points $\tilde{z}_j^i,\,i=1,2.$ Define a uniform partition for $[0,\pi)$ by $\theta_\ell=\frac{\ell\pi}{N_q},\,\ell=1,2,\cdots, N_q.$ Then for each $\tilde{z}_j^i,\,i=1,2,$ we compute the indicator functions defined by 
\begin{align}
\label{eq:Iq}
I_j^i(\boldsymbol{q}_\ell)=\left\langle\bm{u}(\cdot; z_j), \mathbb{G}(\cdot, \tilde{z}_j^i)\bm{q}_\ell\right\rangle,\quad j=1,\cdots, N,
\end{align}
with $\bm{q}_\ell=(\cos\theta_\ell,\sin\theta_\ell)^\top.$
For each $i=1,2,$ we take $\ell_j^i$ such that 
$$\left|I_j^{i}(\boldsymbol{q}_{\ell_j^i})\right|=\max_{\ell=1,\cdots,N_q}\left|I_j^{i}(\boldsymbol{q}_\ell)\right|.$$
Further, we take $i_0$ such that
$$\left|I_j^{i_0}(\bm{q}_{\ell_j^{i_0}})\right|=\max_{i=1,2}\left|I_j^{i}(\boldsymbol{q}_{\ell_j^i})\right|,$$
and take $\tilde{\boldsymbol{q}}_{j}=\boldsymbol{q}_{\ell_j^{i_0}}$ as an approximate to the polarization $\boldsymbol{p}_j.$

Once the polarization $\bm{p}_j,\,j=1,2,\cdots, N$ is approximated by $\tilde{\bm{q}}_{j}$, we shall take $\tilde{z}_j$ such that 
\begin{align}\label{eq:Izj}
\left|I_j^{\tilde{\bm{q}}_{j}}(\tilde{z}_j)\right|=\max_{i=1,2}\left|I_j^{\tilde{\bm{q}}_{j}}(\tilde{z}_j^{i})\right|
\end{align}
as the reconstruction to the exact source points $z_j.$

Finally, the inversion scheme for recovering the source is summarized in Algorithm 1.
\begin{table}\label{algorithm:source}
	\begin{tabular}{cp{.8\textwidth}}
		\toprule
		\multicolumn{2}{l}{{\bf Algorithm 1:}\quad Determine the source points and polarization from the total field.} \\\hline
		{\bf Step 1} & {\bf Data collection:} Measure the data $\mathbb{U}=\{\bm{u}(x; z_j, \bm{p}_j): x\in\Gamma_R,  j=1,\cdots, N\}$;\\
		{\bf Step 2} & {\bf Determine the location:} \\
		& {\bf (a)} Select a sampling domain $\Omega\subset B_R$ such that $D\cup S\subset\Omega$ and generate the sampling grid $\mathcal{T}$ over the sampling domain $\Omega.$ Choose two polarizations $\bm{q}_i=(\cos\theta_i,\sin\theta_i)$ with $\theta_1\in\Big[0,\frac{\pi}{2}\Big),\,\theta_2=\pi-\theta_1;$\\
				 & {\bf (b)} For each sampling point $y\in\mathcal{T},$ compute $I_j^{\bm{q}_i}(y)$, $i=1,2,$ $j=1,2,\cdots, N;$\\
				&{\bf (c)} Collect the maximizer  $\tilde{z}_j^i$ of each indicator $|I_j^{\bm{q}_i}(y)|,\,i=1,2,\,j=1,2,\cdots, N;$ \\
		{\bf Step 3} & {\bf Determine the polarization:} \\
				& {\bf (a)} Define the sampling polarizations $\bm{q}_\ell=\left(\cos \frac{\ell\pi}{N_q},\sin \frac{\ell\pi}{N_q}\right)^\top$,  $\,\ell=1,2,\cdots, N_q$;\\
				&{\bf (b)} For each $j=1,2,\cdots, N,$ and $\ell=1,2,\cdots, N_q,$ compute the indicator function  \eqref{eq:Iq} and take $\ell_j^i$ such that $I_j^i(\bm{q}_{\ell_j^i})=\max\limits_{\ell=1,2,\cdots, N_q}|I_j^i(\bm{q}_{\ell})|$; \\
				&{\bf (c)} For each $j=1,2,\cdots, N$, we take $i_0$ such that
				$$I_j^{i_0}(\bm{q}_{\ell_j^{i_0}})=\max_{i=1,2}\left|I_j^{i}(\boldsymbol{q}_{\ell_j^i})\right|,$$
				and take $\tilde{\bm{q}}_{j}=\bm{q}_{\ell_j^{i_0}}$ as an approximation to the polarization $\boldsymbol{p}_j.$
				\\
		{\bf Step 4}& Once the polarization $\boldsymbol{p}_j$ is approximated by $\tilde{\boldsymbol{q}}_j$, we further take $\tilde{z}_j$ defined by \eqref{eq:Izj} as the reconstruction to the source point $z_j$;\\
		\bottomrule
	\end{tabular}
\end{table}


\section{Recovering the obstacle}\label{sec:Newton}

The identification of sources in the previous section enables us to convert the co-inversion problem into an inverse obstacle scattering problem by subtracting the incident wave from the total field. In this section, we shall further determine the obstacle from the approximate scattered field.

Due to the existence of unknown sources, the scattered field can not be measured directly. Nevertheless, from \Cref{sec:DSM}, the source points and the polarizations can be recovered from the total field. Denote by $\tilde{z}_j$ and $\tilde{\bm{p}}_j, j=1,\cdots, N$ the reconstructed source points and polarization directions. Then the scattered field corresponding to $z_j$ can be approximated by subtracting the incident field $\bm{u}^i(x;\tilde{z}_j, \tilde{\bm{p}}_j)$ due to the numerical source point $\tilde{z}_j$ from the measured total field $\bm{u}(x; z_j, \bm{p}_j),$ i.e.,
\begin{align}\label{eq:appro_us}
\tilde{\bm{v}}(x; z_j,\bm{p}_j)=\bm{u}(x; z_j,\bm{p}_j)-\bm{u}^i(x;\tilde{z}_j,\tilde{\bm{p}}_j), \quad j=1,\cdots,N.
\end{align}
Then the inverse problem is simplified to the inverse scattering problem: reconstruct $\partial D$ from $$\{\tilde{\bm{v}}(x; z_j,\bm{p}_j):x\in\Gamma_R, j=1,\cdots, N\}.$$

For ease of exposition, we mainly consider the reconstruction of the obstacle with a single (exact or approximate) incident wave.  In \Cref{subsec1}, we shall propose a novel Newton-type method for the conventional inverse obstacle scattering problem. Without loss of generality, we shall design the Newton-type method for the more general inverse elastic scattering problem and will denote the scattered field corresponding to $z_j$ as $\bm{v}(x; z_j,\bm{p}_j).$ The co-inversion problem can be tackled by substituting the scattered field $\bm{v}(x; z_j,\bm{p}_j)$ with the approximate scattered field $\tilde{\bm{v}}(x; z_j,\bm{p}_j).$ In what follows, we always 
assume that, under certain prior information about the obstacle, we can choose a closed surface $\Lambda\subset D$ such that $\omega^2$ is not the Dirichlet eigenvalue for $-\Delta^*$ inside $\Lambda$.

\subsection{Layer potentials for approximating the wave field}\label{subsec1}
To establish the iteration scheme, we first represent the approximate scattered field as an appropriate layer potential defined on the auxiliary surface $\Lambda$. Since the representation is dimension-dependent, we first consider the 2D formulation and then discuss the extension to the 3D case.
  
Using the auxiliary curve $\Lambda$, the scattered wave $\bm{v}$ can be represented in the form of \eqref{eq:split}, with the  scalar potential functions $\phi$ and $\psi$ given by the single-layer potential with densities $g_1, g_2,$ respectively, namely:
\begin{align}
& \label{eq:phi} \phi(x)=\int_{\Lambda}\Phi_p(x, y)g_1(y)\mathrm{d}s(y),\\
& \label{eq:psi} \psi(x)=\int_{\Lambda}\Phi_s(x, y)g_2(y)\mathrm{d}s(y).
\end{align}

Given the scattered field $\bm{v}=(v_1, v_2)^\top$ on $\Gamma_R,$ the scalar potential functions $\phi$ and $\psi$ are supposed to satisfy
$\nabla\phi+\mathbf{curl}\psi=\bm{v}$ on $\Gamma_R$, thus the density $\bm{g}=(g_1, g_2)^\top$ satisfies the following integral equation:
\begin{align}\label{eq:density}
(\mathcal{S}\bm{g})(x)=\bm{v}(x),\quad x\in\Gamma_R,
\end{align}
with the operator $\mathcal{S}:(L^2(\Lambda))^2\to(L^2(\Gamma_R))^2$ defined by
\begin{align*}
(\mathcal{S}\bm{g})(x)=\int_\Lambda
\mathbb{K}(x, y)\bm{g}(y)\mathrm{d}s(y),
\end{align*}
where $\bm{g}=(g_1,g_2)^\top\in(L^2(\Lambda))^2$, and the kernel is given by
\begin{align*}
\mathbb{K}(x, y)=
\begin{bmatrix}
\partial_{x_1}\Phi_p(x, y)&\partial_{x_2}\Phi_s(x, y)\\
\partial_{x_2}\Phi_p(x, y)&-\partial_{x_1}\Phi_s(x, y)
\end{bmatrix}.
\end{align*}
Accordingly, the adjoint operator $\mathcal{S}^*: (L^2(\Gamma_R))^2\to(L^2(\Lambda))^2$ is given by
$$
(\mathcal{S}^*\bm{\psi})(y)=\int_{\Gamma_R}\mathbb{K}^*(y, x)\bm{\psi}(x)
\mathrm{d}s(x),
$$
where $\mathbb{K}^*$ is the complex conjugate transpose of $\mathbb{K}$, and $\bm{\psi}=(\psi_1,\psi_2)^\top\in (L^2(\Gamma_R))^2.$

The integral operator $\mathcal{S}$ has an analytic kernel and therefore \eqref{eq:density} is severely ill-posed, which motivates us to apply the Tikhonov regularization to find the regularized density $\bm{g}^\xi=(g_1^\xi, g_2^\xi)^\top$ by solving
\begin{equation}\label{eq:regDensity}
(\xi \mathbb{I}_2+\mathcal{S}^*\mathcal{S})\bm{g}^\xi=\mathcal{S}^*\bm{v},
\end{equation}
where $\xi>0$ is the regularization parameter.

Once the regularized density function $\bm{g}^\xi=(g_1^\xi, g_2^\xi)^\top$ is obtained by solving \eqref{eq:regDensity}, the approximation $\bm{v}^\xi=(v_1^\xi, v_2^\xi)^\top$ for the scattered field $\bm{v}$ can be represented in form of
\[
\bm{v}^\xi=\nabla\phi^\xi+\bf{curl}\psi^\xi,
\]
with $\phi^\xi$ and $\psi^\xi$ obtained by inserting the regularized densities $g_1^\xi$ and $g_2^\xi$ into the single-layer potential representation \eqref{eq:phi} and \eqref{eq:psi}, respectively.
Explicitly, we obtain that
\begin{align}\label{eq:regscattered}
\bm{v}^\xi(x)=\int_\Lambda\mathbb{K}(x, y)\bm{g}^\xi(y)\mathrm{d}s(y).
\end{align}

Similar to the 2D case, the 3D formulation of the potentials can be derived as well. Assume that the scattered field $\bm{v}$ is split by \eqref{eq:split} into a scalar potential $\phi$ and a vector potential $\bm{\psi}$:
\begin{align}
& \label{eq:phi3D} \phi(x)=\int_{\Lambda}\Phi_p(x, y)g_p(y)\mathrm{d}s(y),\\
& \label{eq:psi3D} \bm{\psi}(x)=\frac{1}{k_s^2}\mathbf{curl}\mathbf{curl}\int_{\Lambda}\Phi_s(x, y)\bm{g}_s(y)\mathrm{d}s(y),
\end{align}
where $g_p\in L^2(\Lambda), \bm{g}_s=(g_{s1}, g_{s2}, g_{s3})^\top\in (L^2(\Lambda))^3$ are the scalar and vector densities,  respectively.

Denote by $\bm{\nu}=(\nu_1,\nu_2,\nu_3)^\top\in\mathbb{S}^2$ the unit normal vector to $\Gamma_R,$ and let $
\nabla\phi+\mathbf{curl}\bm{\psi}=\bm{v}$ on $\Gamma_R$. Taking the dot product and the cross product of the above equation with $\bm{\nu}$, respectively, we get
\begin{equation}\label{eq:density3D1}
\begin{cases}
\quad\bm{\nu}\cdot\nabla\phi+\bm{\nu}\cdot\mathbf{curl}\bm{\psi}=\bm{\nu}\cdot\bm{v}, & \text{on}\ \Gamma_R, \\
\bm{\nu}\times\nabla\phi+\bm{\nu}\times\mathbf{curl}\bm{\psi}=\bm{\nu}\times\bm{v}, & \text{on}\ \Gamma_R.
\end{cases}
\end{equation}
Using $\mathbf{curl}\mathbf{curl}=\nabla(\nabla\cdot)-\Delta,$ one easily derives that $\mathbf{curl}\bm{\psi}=\mathbf{curl}\int_{\Lambda}\Phi_s(x, y)\bm{g}_s(y)\mathrm{d}s(y)$. Consequently, substituting $\phi$ and $\bm{\psi}$ in \eqref{eq:density3D1} by \eqref{eq:phi3D}--\eqref{eq:psi3D}, a straightforward calculation shows that 
\begin{equation}\label{eq:density3D}
(\mathcal{S}\bm{g})(x)=\int_{\Lambda}\mathbb{K}(x, y)\bm{g}(y)\mathrm{d}s(y)=\bm{t}(x),\quad x\in\,\Gamma_R,
\end{equation}
where 
$$
\mathbb{K}=
\begin{bmatrix}
    \partial_{\bm{\nu}} \Phi_p    & (\bm{\nu}\times\nabla\Phi_s)^\top \\
    \bm{\nu}\times\nabla\Phi_p & \nabla\Phi_s\otimes\bm{\nu}-\partial_{\bm{\nu}}\Phi_s\mathbb{I}_3
\end{bmatrix}
,\quad
\bm{g}=
\begin{bmatrix}
    g_p\\
    \bm{g_s}
\end{bmatrix}
,\quad
\bm{t}=
\begin{bmatrix}
    \bm{\nu}\cdot\bm{v}\\
    \bm{\nu}\times\bm{v}
\end{bmatrix}.
$$

Since the operator $\mathcal{S}:(L^2(\Lambda))^4\to(L^2(\Gamma_R))^4$ in \eqref{eq:density3D} is well defined and it has an analytic kernel, the Tikhonov regularization strategy is employed to solve the ill-conditioned equation \eqref{eq:density3D}:
\begin{equation}\label{eq:regDensity3D}
(\xi \mathbb{I}_4+\mathcal{S}^*\mathcal{S})\bm{g}^\xi=\mathcal{S}^*\bm{t},
\end{equation}
where $\xi>0$ is the regularization parameter, $\mathbb{I}_4$ is the $4\times 4$ identity matrix, and $\mathcal{S}^*:(L^2(\Gamma_R))^4\to(L^2(\Lambda))^4$ is the adjoint operator of $\mathcal{S}$.

Given the regularized density $\bm{g}^\xi=\left(g_p^\xi, \bm{g}_s^\xi\right)^\top,$ the approximate scattered field $\bm{v}^\xi=(v_1^\xi, v_2^\xi, v_3^\xi)^\top$ can be given by 
\begin{align*}
\bm{v}^\xi(x) & =\nabla\phi^\xi(x)+\mathbf{curl}\bm{\psi}^\xi(x) \\
& = \nabla\int_\Lambda\Phi_p(x, y)g_p^\xi(y)\mathrm{d}s(y)+\frac{1}{k_s^2}\mathbf{curl}\mathbf{curl}\mathbf{curl}\int_\Lambda\Phi_s(x, y)\bm{g}_s^\xi(y)\mathrm{d}s(y)\\
& = \int_\Lambda \left( \nabla\Phi_p(x, y)g_p^\xi(y)+ \mathbf{curl}(\Phi_s(x, y)\bm{g}_s^\xi(y)) \right)\mathrm{d}s(y).
\end{align*}

\subsection{Linearization by explicit gradient}
Once the ansatz field $\bm{v}^\xi$ is available, the gradient can be evaluated explicitly. In 2D, the gradient of the approximate scattered field $\bm{v}^\xi$ can be  calculated by
\begin{align*}
\nabla\bm{v}^\xi &=
\begin{bmatrix}
\partial_{x_1}v_1^\xi & \partial_{x_2}v_1^\xi\\
\partial_{x_1}v_2^\xi & \partial_{x_2}v_2^\xi
\end{bmatrix},
\end{align*}
where
\begin{align*}
& \partial_{x_1}v_1^\xi=\int_\Lambda\left(\partial^2_{x_1x_1}\Phi_p(x, y)g_1^\xi(y)
+\partial^2_{x_1x_2}\Phi_s(x, y)g_2^\xi(y)\right)\mathrm{d}s(y), \\
& \partial_{x_2}v_1^\xi=\int_\Lambda\Big(\partial^2_{x_1x_2}\Phi_p(x, y)g_1^\xi(y)
+\partial^2_{x_2x_2}\Phi_s(x, y)g_2^\xi(y)\Big)\mathrm{d}s(y),  \\
& \partial_{x_1}v_2^\xi=\int_\Lambda\Big(\partial^2_{x_1x_2}\Phi_p(x, y)g_1^\xi(y)
-\partial^2_{x_1x_1}\Phi_s(x, y)g_2^\xi(y)\Big)\mathrm{d}s(y), \\
& \partial_{x_2}v_2^\xi=\int_\Lambda\left(\partial^2_{x_2x_2}\Phi_p(x, y)g_1^\xi(y)
-\partial^2_{x_1x_2}\Phi_s(x, y)g_2^\xi(y)\right)\mathrm{d}s(y).
\end{align*}

Analogously, the gradient of the approximate scattered field $\bm{v}^\xi$ in $\mathbb{R}^3$ can be explicitly calculated as the $3\times 3$ tensor $\nabla^\top\bm{v}^\xi$ whose $(i, j)$-th entry is given by $\left(\partial_{x_j}v_i^\xi\right),\ i, j=1,2,3$.

Based on the gradient of the approximate scattered field, we are now ready to propose the Newton-type iteration scheme for recovering $\partial D$. Given the incident field $\bm{u}^i$ and the approximate scattered field $\bm{v}^\xi,$ we define the operator $F^\xi$ mapping the boundary contour $\gamma$ onto the approximate total field $\bm{u}^\xi=\bm{u}^i+\bm{v}^\xi=(u_1^\xi, \cdots, u_d^\xi)^\top$, for later use, we denote
\begin{equation}\label{eq:def_F}
F^\xi:\gamma\mapsto \bm{u}^\xi.
\end{equation}
To seek the Dirichlet boundary where the total field $\bm{u}^\xi$ vanishes, we only need to find the parameterization $p$ of the boundary contour $\gamma$ such that
\begin{equation}\label{Gz=0}
F^\xi(p)=\bm{0},\quad p\in\gamma.
\end{equation}

Now, we consider the linearization of the above equation. Let $\gamma_j,\,j=0,1,\cdots, n$ be the approximation to the boundary $\partial D$, we want to seek for $\gamma_{n+1}$ such that $F(p_{n+1})=0.$ Noticing that the mapping $F$ is nonlinear, we instead update the approximation via the following procedure
\begin{equation}\label{eq:Newton}
\left\{
\begin{aligned}
&F^\xi(p_n)+F'^\xi(p_n)h_n=\bm{0},\\
&p_{n+1}=p_n+h_n,
\end{aligned}
\right.
\end{equation}
where $h_n$ is the shift at the $n$-th iteration and the gradient $F'^\xi=\nabla^\top\bm{u}^\xi$ in \eqref{eq:Newton} is a $d\times d$ tensor whose elements can be explicitly computed by
$$
\left[F'^\xi\right]_{i j}=\partial_{x_j}u_i^\xi,\quad i, j=1,\cdots,d.
$$
\begin{remark}
	The above procedure can be readily extended to the case of more than one incident wave. For example, here we briefly mention in passing the 3D modifications concerning multiple sources. The other similar details are omitted. 
	
	Let $\bm{v}_j(j=1,\cdots, N)$ denote the scattered field due to incident wave $\bm{u}^i_j(j=1,\cdots, N)$. Correspondingly, the vector functions $\bm{g}$ and $\bm{t}$ in \eqref{eq:density3D} should be replaced by the matrix-valued functions
	$$
	\bm{g}=
	\begin{bmatrix}
	\bm{g}_1 \cdots \bm{g}_N
	\end{bmatrix},
	\quad
	\bm{t}=
	\begin{bmatrix}
	\bm{\nu}\cdot\bm{v}_1 & \cdots & \bm{\nu}\cdot\bm{v}_N \\
	\bm{\nu}\times\bm{v}_1 & \cdots &  \bm{\nu}\times\bm{v}_N
	\end{bmatrix}.
	$$
	Then the approximate scattered field $\bm{v}_j^\xi$ can be represented by the layer potential with regularized density $\bm{g}_j^\xi(j=1,\cdots, N)$. Accordingly, the operator $F^\xi$ is column-wisely extended to the form
	$$
	F^\xi:\gamma\mapsto \bm{u}^\xi=[\bm{u}^\xi_1 \cdots \bm{u}^\xi_N],
	$$
	where $\bm{u}_j^\xi=\bm{u}_j^i+\bm{v}_j^\xi(j=1,\cdots, N)$ are the approximate total fields.
\end{remark}


\subsection{Star-like approximation}
To accomplish the iteration numerically, we still need an appropriate representation of the admissible surface $p(t)$ for approximating $\partial D$. Therefore, we briefly describe the 2D star-like approximation of the boundary curve, which is in the parametric form 
\[
p(t)=\{r(t)(\cos t,\sin t): t\in[0,2\pi]\},
\]
with $r\in C^2([0,2\pi], \mathbb{R}_+)$ denoting the radial function. For simplicity, we also denote by $r$ the approximation to $\partial D$ in what follows. To numerically approximate $r$, we assume that $r$ is represented as the trigonometric polynomials of degree less than or equal to $M\in\mathbb{N}_+,$ i.e.,
$$
r(t)=a_0+\sum_{\ell=1}^M(a_\ell\cos \ell t+b_\ell\sin\ell t).
$$
 
For an iteration sequence $\{r_n\}$ of radial functions, the shift at the $n$-th step is correspondingly written as $r_n^h$, i.e., $r_{n+1}=r_n+r_n^h, n=1, 2, \cdots$. Alternatively, if we denote by $\bm{c}= (a_0, a_1, \cdots,  a_M, b_1, \cdots,  b_M)^\top$ the Fourier coefficients to $r$, then the iteration process is implemented via the update $\bm{c}_{n+1}=\bm{c}_n+\bm{c}_n^h$ where $\{\bm{c}_n\}$ and $\{\bm{c}_n^h\}$ are the Fourier coefficients to $r_n$ and $r_n^h$, respectively.

Now, the iteration procedure \eqref{eq:Newton} can be rewritten as
$$
\begin{cases}
\bm{u}^\xi(r_n\hat{x})+(\nabla^\top\bm{u}^\xi(r_n\hat{x})\hat{x})r_n^h=\bm{0},\\
r_{n+1}=r_n+r_n^h,
\end{cases}
$$
or more specifically,
$$
\begin{cases}
\bm{u}^\xi(B\bm{c}_n\hat{x})+(\nabla^\top\bm{u}^\xi(B\bm{c}_n\hat{x})\hat{x})B\bm{c}_n^h=\bm{0},\\
\bm{c}_{n+1}=\bm{c}_n+\bm{c}_n^h,
\end{cases}
$$
where $\hat{x}(t)=(\cos t, \sin t)^\top$ and $B(t)=(1, \cos t, \cdots, \cos Mt, \sin t, \cdots, \sin Mt)$.  For the further discretization, suppose that $t_j=2\pi j/J, j=1,\cdots, J,$ is a set of quadrature points, then the update $\bm{c}_n^h$ can be obtained by solving the linear system
$$
(\nabla^\top\bm{u}^\xi(B(t_j)\bm{c}_n\hat{x}(t_j))\hat{x}(t_j))B(t_j)\bm{c}_n^h=-\bm{u}^\xi(B(t_j)\bm{c}_n\hat{x}(t_j)),\quad j=1,\cdots, J.
$$

Typically for iterative algorithms, we finally need to impose a stopping criterion to terminate the iteration process. For convenience, we quantify the convergence of iteration by the relative error
\begin{equation}\label{eq:error}
E_n=\frac{\|h_n\|_{L^2}}{\|p_{n-1}\|_{L^2}},
\end{equation}
and choose some constant $\varepsilon>0.$ Once $E_n<\varepsilon$, the update process can be stopped.
For a description of the inversion algorithm to determine the obstacle, we refer to Algorithm 2.

\begin{table}
	\centering
	\begin{tabular}{cp{.8\textwidth}}
		\toprule
		\multicolumn{2}{l}{{\bf Algorithm 2:}\quad Newton-type method for the inverse elastic obstacle problem.} \\\hline
		{\bf Step 1} & Generate the approximate scattered data on $\Gamma_R$ by subtracting the exact or approximate incident field from the total field measurements $\mathbb{U}$; \\
		{\bf Step 2} & Select an auxiliary surface $\Lambda\subset D$ and represent the approximate scattered field as layer potentials by solving \eqref{eq:regDensity} or \eqref{eq:regDensity3D}; \\
		{\bf Step 3} & Choose an initial guess $\gamma^{(0)}$ for $\partial D$ and the error tolerance $\varepsilon.$ Set $n=1$; \\
		{\bf Step 4} &Solve \eqref{eq:Newton} to update the approximation $p_n$ and evaluate the error $E_n$;\\
		{\bf Step 5} &If $E_n>\varepsilon,$ then set $n=n+1$ and go to {\bf Step 4}. Otherwise, take the current approximation $p_n$ as the final reconstruction of $\partial D.$
		\\\bottomrule
	\end{tabular}
\end{table}


\section{Numerical experiments}\label{sec:numerical}
In this section, we shall conduct several numerical experiments to verify the efficiency and effectiveness of the proposed methods. 
In our numerical experiments, the synthetic scattered fields are generated by reformulating the direct problem into coupled boundary integral equations (cf. \cite{Dong1, Dong2}), and the integral equations are solved by the Nystr\"{o}m method based on Alpert's quadrature. The receivers are chosen to be $x_r=10(\cos\theta_r,\sin\theta_r),\,\theta_r=\pi r/60,\,r=1,2,\cdots, 120.$ The forward solver produces the total field data
$$
\bm{u}(x_r; z_j),\quad r=1,\cdots, 120,\ j=1,\cdots, N.
$$

To test the stability of the proposed algorithm, random noise is added to the measured data. The noisy total field data are given according to the following formula:
\begin{align*}
\bm{u}^\epsilon=\bm{u}+\epsilon r_1|\bm{u}|\mathrm{e}^{\mathrm{i}\pi r_2},
\end{align*}
where $r_1,\,r_2$ are two uniformly distributed random numbers ranging from $-1$ to $1,$ and $\epsilon>0$ is the noise level.

In the following experiments, the Lam\'{e} constants are set to be $\lambda=\mu=1$. The sampling domain for locating the source points is set to be $\Omega = [-5,5]\times[-5,5]$ with $200\times200$ equally spaced sampling grid. 
For the purpose to determine the polarization $\bm{p},$ we set $N_q=40$ in {\bf Step 3} of Algorithm 1, i.e., we seek for $\bm{p}$ over a sampling angles $\theta_\ell=\frac{\ell\pi}{40},\,\ell=0,1,\cdots, 39$.
 The noise level is set to be $\epsilon=5\%$ unless otherwise stated. The auxiliary curve is chosen to be a circle centered at the origin with a radius of $0.7.$ The integrals over the auxiliary curve are numerically approximated by the trapezoidal rule with 100 grid points. The regularization parameter $\xi$ is set to be $10^{-2}$. The boundaries of the obstacles are parameterized as follows:
\begin{align}
L\text{-leaf:}\quad&x(t)=(1+0.2\cos Lt)(\cos t,\sin t),\quad 0\le t\le 2\pi,\quad L=3, 5,\\
\text{Kite:}\quad&x(t)=(\cos t+0.65\cos2t-0.65,1.5\sin t),\quad 0\le t\le 2\pi.
\end{align}

\subsection{Inverse source problem}\label{subsec:inverse_source}
In this subsection, we test the performance of Algorithm 1 in reconstructing the source from the measured total field, i.e., we now consider the source identification with the unknown obstacle.

\begin{example}
	In the first example, we consider the reconstruction of the locations and the polarizations of the source points from the noisy total field data. The angular frequency is set to be $\omega=8.$ The two initial polarizations are chosen to be $\bm{q}_1=\frac{1}{\sqrt{2}}(1,1)^\top$ and $\bm{q}_2=\frac{1}{\sqrt{2}}(-1,1)^\top$. The source locations and polarizations together with the reconstructions are displayed in \Cref{tab:source}. As shown in \Cref{tab:source}, all the locations and the polarizations are well-reconstructed from the noisy total field data. It deserves noting that, though the polarizations differ from point to point, all the polarizations are well-recovered.
	
	Further, we point out that it is necessary to determine the source positions and the polarizations twice using different auxiliary polarizations $\bm{q}_i,\,i=1,2,$ as done in \Cref{subsec:polarization}. Otherwise, it may fail to reconstruct the source (locations or polarizations) correctly. To clarify this point, we further consider the reconstruction of the four source points and the associated polarizations in \Cref{tab:source2}--\Cref{tab:source3}.
	\Cref{tab:source2} shows that the locations can be well reconstructed. One can find from \Cref{tab:source3} that the reconstruction of the polarization significantly depends on the choice of $\bm{q}_i$. However, after a second calibration as introduced in \Cref{subsec:polarization}, the polarizations can be also stably and well recovered. These results illustrate that our method performs well in recognizing the source position and the associated polarization from the noisy total field data. 
\end{example}

\begin{table}[!htb]
	\begin{center}
		\caption{Comparison of the locations and polarizations.}\label{tab:source}
		\label{tab:table1}
		\begin{tabular}{l|ll|ll}\toprule
			&\multicolumn{2}{c|}{Exact sources} & \multicolumn{2}{c}{Reconstructed sources}\\ 
			\cline{2-5} 
			&Locations & polarizations &Locations & polarizations \\\hline
			Point 1 & $(3,0)$ & $(0.86,0.5)$ & $(2.98,-0.03)$&$(0.89,0.45)$\\
			Point 2 & $(1.5,2.59)$ & $(0.5,0.86)$ & $(1.47,2.56)$&$(0.52,0.85)$\\
			Point 3 & $(-1.5,2.59)$ & $(0,1)$ & $(-1.47,2.62)$&$(-0.07,0.99)$\\
			Point 4 & $(-3,0)$&$(-0.5,0.86)$&$(-3.04,0.03)$&$(-0.52,0.85)$\\
			Point 5 & $(-1.5,-2.59)$&$(-0.86,0.5)$&$(-1.53,-2.62)$&$(-0.85,0.52)$\\
			\bottomrule
		\end{tabular}
	\end{center}
\end{table}
\begin{table}[!htb]
\begin{center}
	\caption{Reconstruction of the four point sources (locations).}\label{tab:source2}
	\begin{tabular}{c|cccc}\toprule
		& Point 1 & Point 2 & Point 3 & Point 4\\\hline
	Exact & $(3,0)$ & $(0,3)$ & $(-3,0)$ & $(0,-3)$\\
	Reconstructed & $(2.98, -0.03)$ & $(-0.03, 2.98)$ & $(-3.04, 0.03)$ & $(-0.03, -2.98)$\\\bottomrule
	\end{tabular}
\end{center}

\end{table}
\begin{table}[!htb]
\begin{center}
	\caption{Reconstruction of the four point sources (polarizations).}\label{tab:source3}
	\begin{tabular}{ll|lll}\toprule
		& polarizations & $\bm{q}_{\ell_j^1}$ & $\bm{q}_{\ell_j^2}$ & $\tilde{\bm{q}}_j$\\ \hline
		Point 1 & $(0.76,0.64)$ & $(0.76,0.64)$ & $(-0.99,0.07)$ & $(0.76,0.64)$\\
		Point 2 & $(0,1)$ & $(0,1)$ & $(0,1)$ & $(0,1)$\\
		Point 3 & $(-0.71,0.71)$ & $(0.99,0.07)$ & $(-0.71,0.71)$ & $(-0.71,0.71)$\\
		Point 4 & $(-1,0)$ & $(0.80,0.58)$ & $(-1,0)$ & $(-1,0)$\\
		\bottomrule
	\end{tabular}
\end{center}
\end{table}

\subsection{Inverse obstacle scattering problem}\label{subsec:inverse_obstacle}
In this subsection, we test the performance of Algorithm 2 in recovering the obstacle from the scattered field under the assumption that the sources are known in advance. In all the figures in this subsection, the red solid lines represent the exact boundaries, and the black dashed lines denote the reconstructed boundaries.
\begin{example}
	In this example, we adopt the Newton-type method proposed in \Cref{sec:Newton} to recover the obstacles of 3-leaf shape and kite shape.	 Here 12 source points are equally placed on the measurement circle with radius $R=3.$ The polarization is chosen to be $\bm{p}=(\cos(\pi/3),\sin(\pi/3)).$
\end{example}

We first consider the reconstruction of the 3-leaf shaped obstacle. By taking $\omega=5,\,8$ and 10, we display the reconstruction for the 3-leaf shaped obstacle in \Cref{fig:pear_obstacle}. From \Cref{fig:pear_obstacle}, we find that the 3-leaf obstacle can be well-reconstructed with three angular frequencies.

Further, we consider the reconstruction of the kite-shaped obstacle. By taking $\omega$ to be $3,5$ and $6,$ we display the reconstruction of the kite-shaped obstacle in \Cref{fig:kite_obstacle}--\Cref{fig:kite_obstacle2}. It can be seen that the reconstructions are accurate.

In \Cref{fig:pear_obstacle}--\Cref{fig:kite_obstacle2}, the green dashed lines denote the initial guesses for the Newton-type methods. We can easily see from the results that the method performs well no matter whether the obstacle is starlike or not. Especially for the kite-shaped obstacle, the two wings and the concave regions are well recovered using the proposed method.

In addition, the relative error estimator $E_n$ defined in \eqref{eq:error} is plotted against the number of iterations in the second row of \Cref{fig:kite_obstacle}. It can be seen from the error curves that the relative error estimator $E_n$ decreases quickly, which demonstrates that our method converges fast.

\begin{figure}
	\centering
	\subfigure[]{\includegraphics[width=0.3\textwidth]{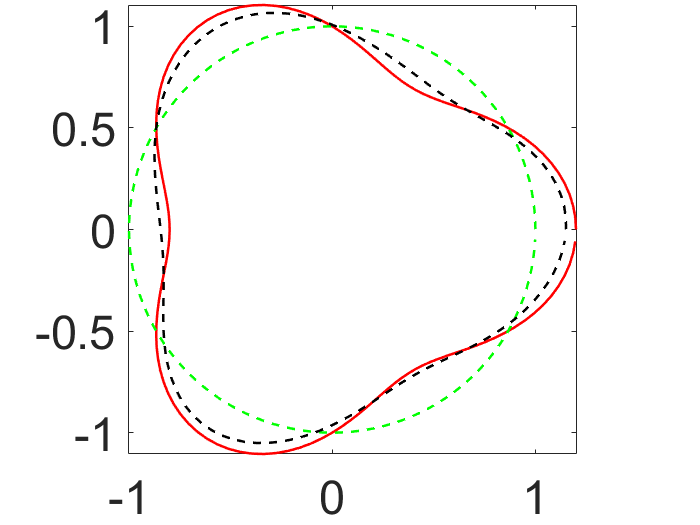}}
	\subfigure[]{\includegraphics[width=0.3\textwidth]{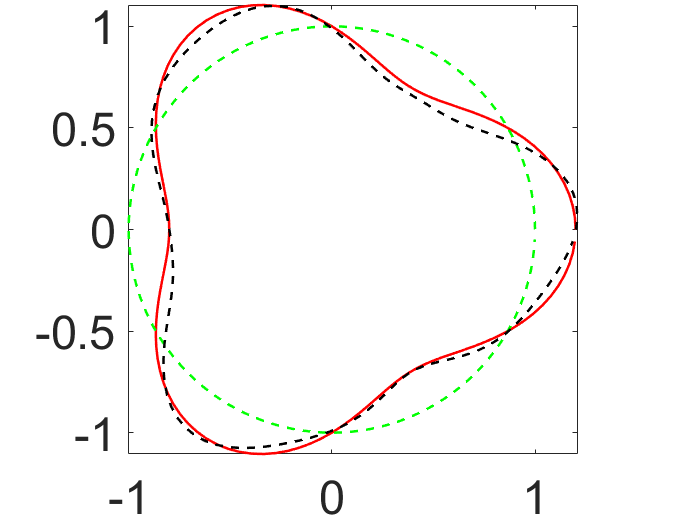}}
	\subfigure[]{\includegraphics[width=0.3\textwidth]{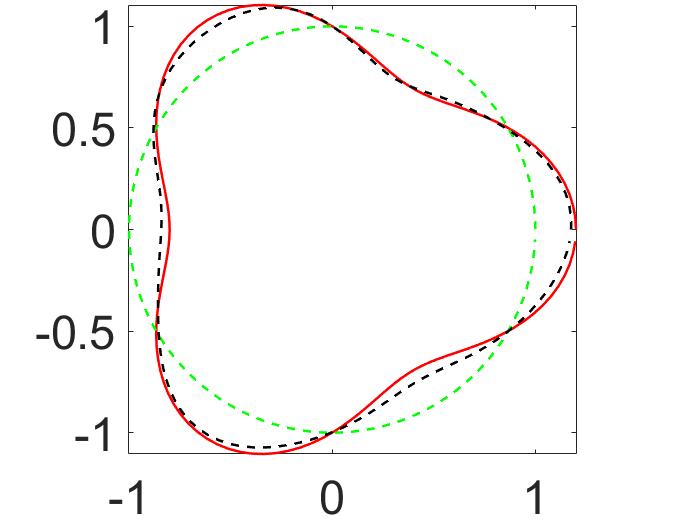}} \\	
	\subfigure[]{\includegraphics[width=0.3\textwidth]{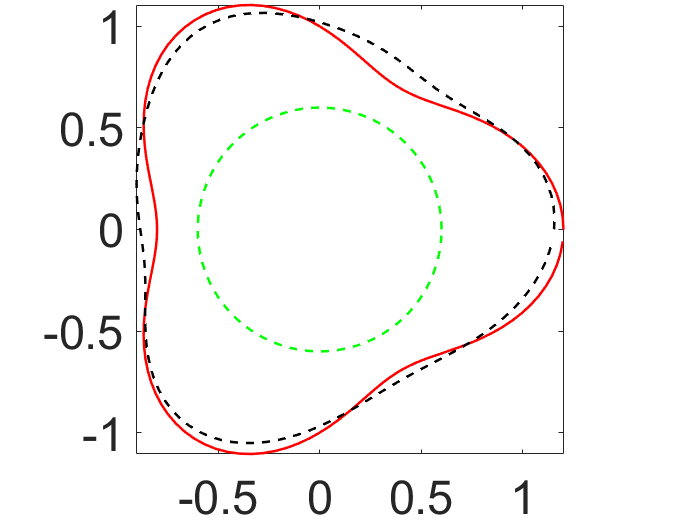}}
	\subfigure[]{\includegraphics[width=0.3\textwidth]{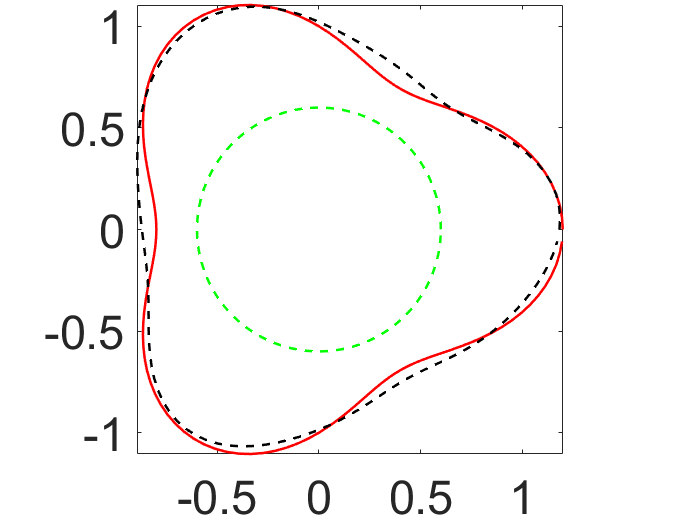}}
	\subfigure[]{\includegraphics[width=0.3\textwidth]{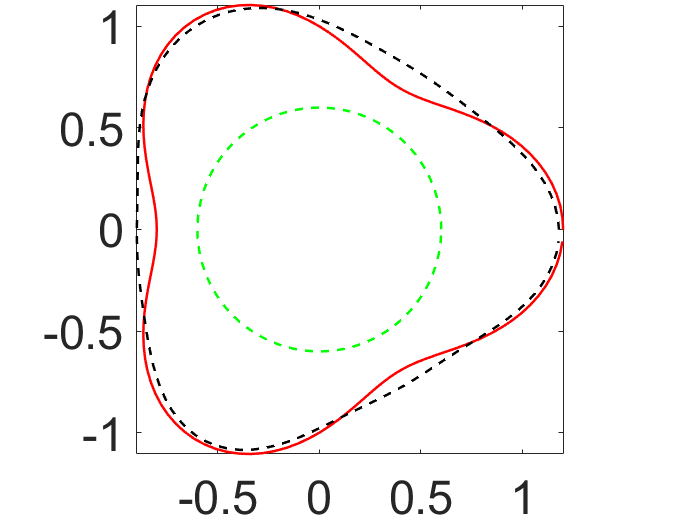}}
	\caption{Reconstruction of the 3-leaf obstacle with different frequencies. Left column: $\omega=6$; Middle column: $\omega=8$; Right column: $\omega=10$.}\label{fig:pear_obstacle}
\end{figure}

\begin{figure}
	\centering
	\subfigure[]{\includegraphics[width=0.3\textwidth]{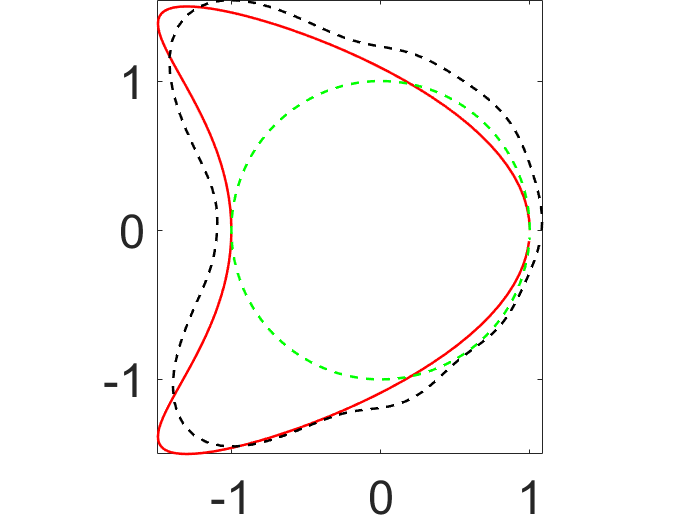}}
	\subfigure[]{\includegraphics[width=0.3\textwidth]{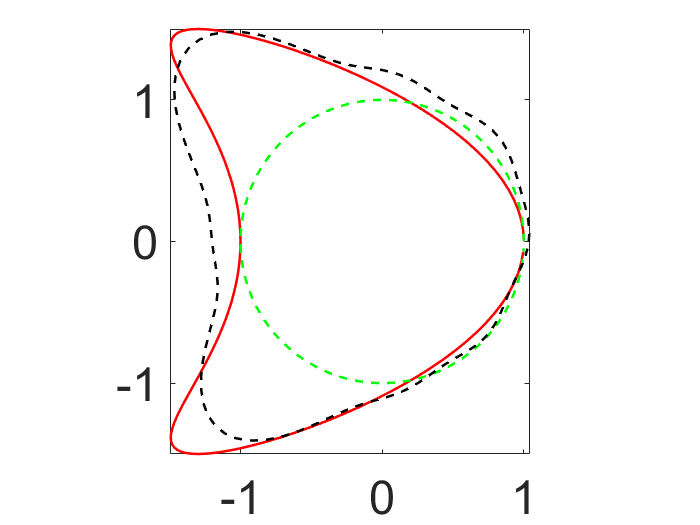}}
	\subfigure[]{\includegraphics[width=0.3\textwidth]{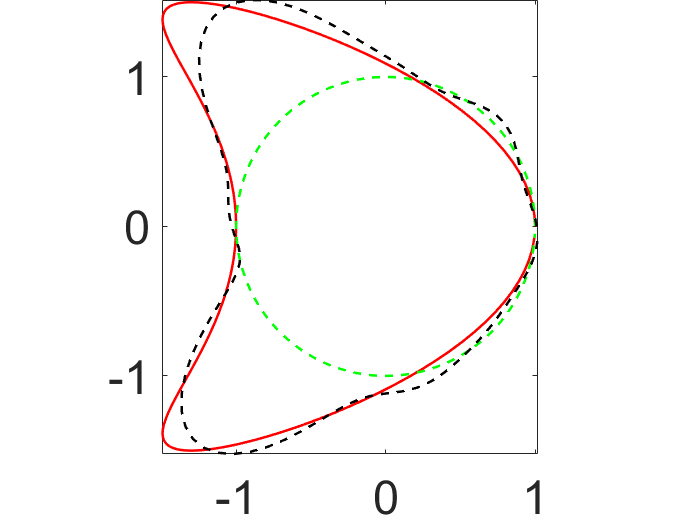}}
	
	\subfigure[]{\includegraphics[width=0.3\textwidth]{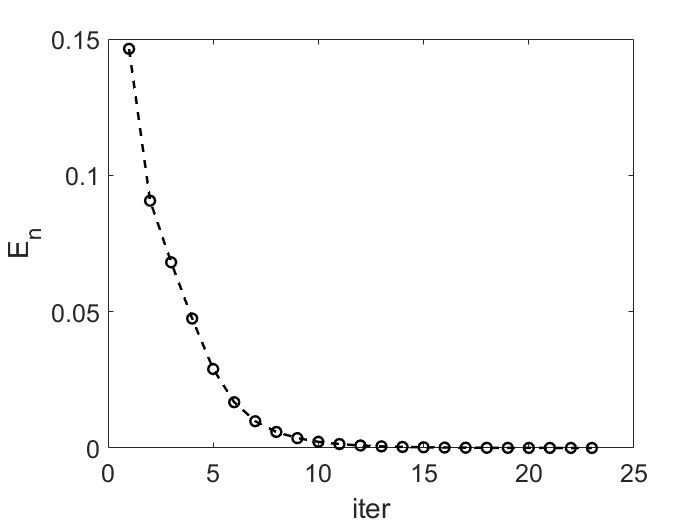}}
	\subfigure[]{\includegraphics[width=0.3\textwidth]{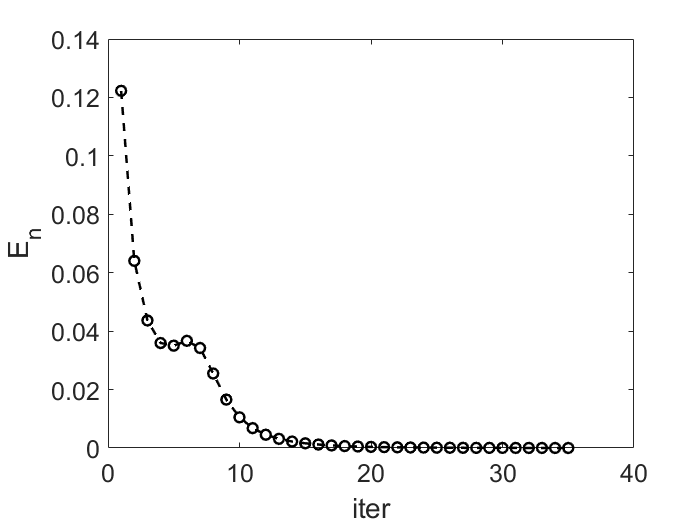}}
	\subfigure[]{\includegraphics[width=0.3\textwidth]{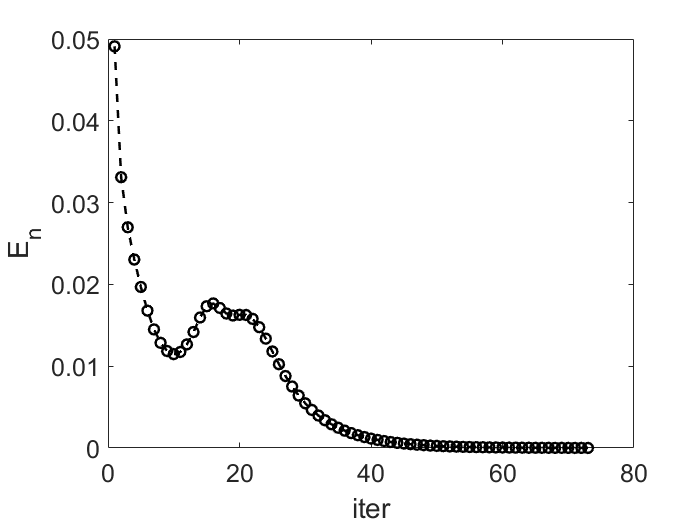}}
	\caption{Reconstruction of the kite-shaped obstacle with different frequencies. Left column: $\omega=3$; Middle column: $\omega=4$; Right column: $\omega=6$.}\label{fig:kite_obstacle}
\end{figure}

\begin{figure}
	\centering
	\subfigure[]{\includegraphics[width=0.3\textwidth]{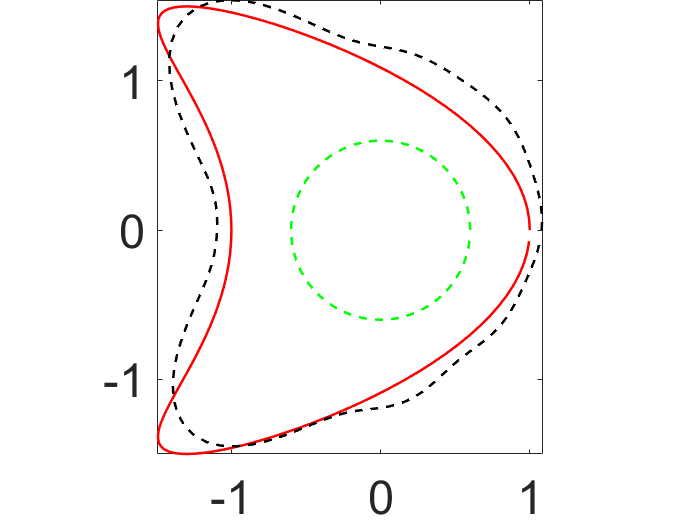}}
	\subfigure[]{\includegraphics[width=0.31\textwidth]{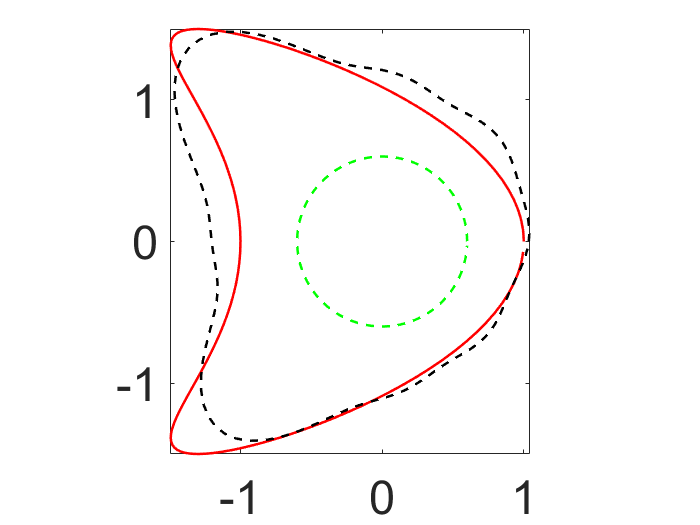}}
	\subfigure[]{\includegraphics[width=0.3\textwidth]{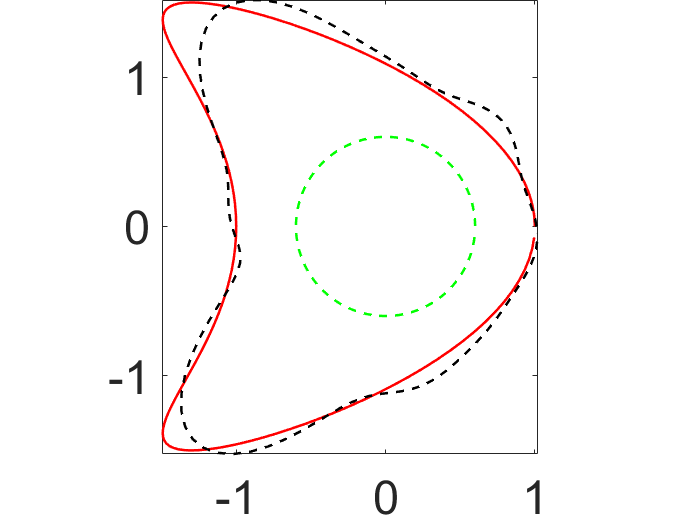}}
	\caption{Reconstruction of the kite-shaped obstacle with initial guess different from that in \Cref{fig:kite_obstacle}. (a) $\omega=3$; (b) $\omega=4$; (c) $\omega=6$.}\label{fig:kite_obstacle2}
\end{figure}

\begin{example}
	In the second example, we consider the reconstruction of star-like obstacles whose radial functions $r(t)$ of the boundary curves are randomly generated by the following procedure:
\begin{itemize}
		\item Choose $N_t$ randomly in the integer set $\{8,9,\cdots, 20\}$ and generate the knots $T_t=2t\pi/N_t,\,t=0,1,\cdots, N_t-1$;
		\item For each $T_t,$ we generate randomly the radial grid knots $r(T_t)\sim\mathcal{U}[0.8,1.8],$ i.e., $r(T_t)$ is a uniform distribution in $[0.8,1.8].$ Under this setting, the boundary curves are located in the annular domain with the inner radius $0.8$ and outer radius  $1.8$;
		\item Given the random radial grid knots $(T_t, r(T_t)),\,t=0,1,\cdots, n_T-1,$ we generate the radial function $r(t)$ by the cubic spline interpolation  to $(T_t, r(T_t)).$ In addition, to ensure that the generated star-like curves are closed, the periodic condition $r(T_0)=r(T_{n_T})$ is imposed;
\end{itemize}

We refer to \Cref{fig:Random} for some examples of such randomly generated shapes. In \Cref{fig:Random}, the red solid lines stand for the boundaries of the obstacle, the black small circles designate the radial gird knots $r(T_t)$ generated randomly, the black solid lines mark the radial and the blue dashed circles with radii 0.8 and 1.4 respectively bound the domains containing the boundary curves.
\end{example}

\begin{figure}
	\centering
	\subfigure[]{\includegraphics[width=0.3\textwidth]{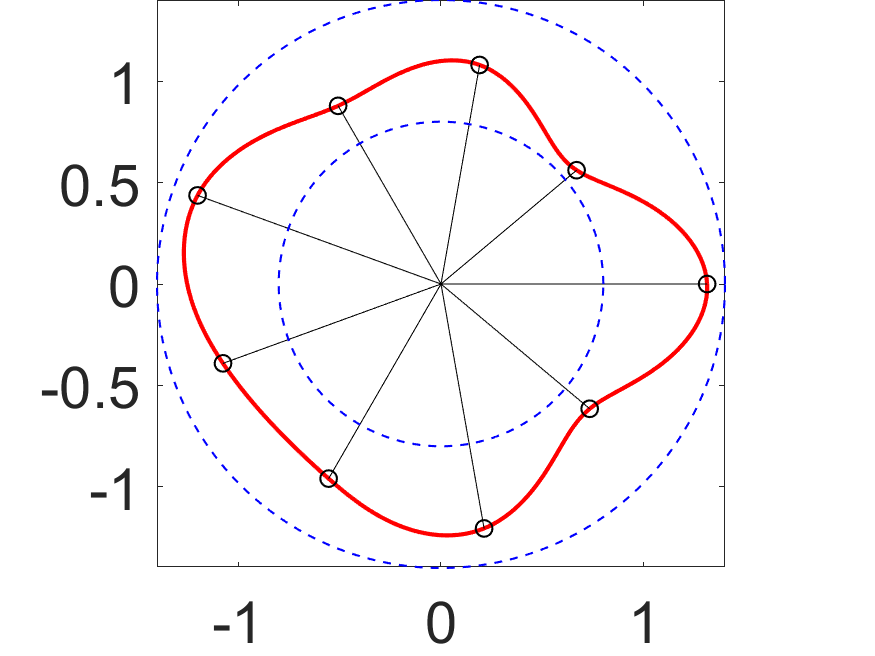}}
	\subfigure[]{\includegraphics[width=0.3\textwidth]{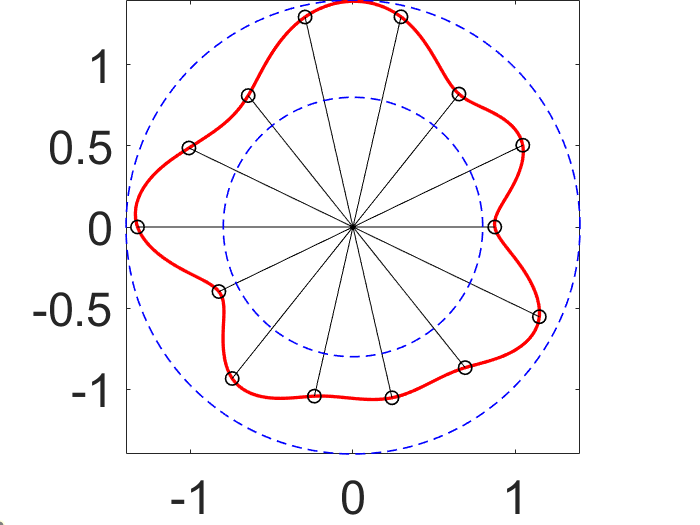}}
	\subfigure[]{\includegraphics[width=0.3\textwidth]{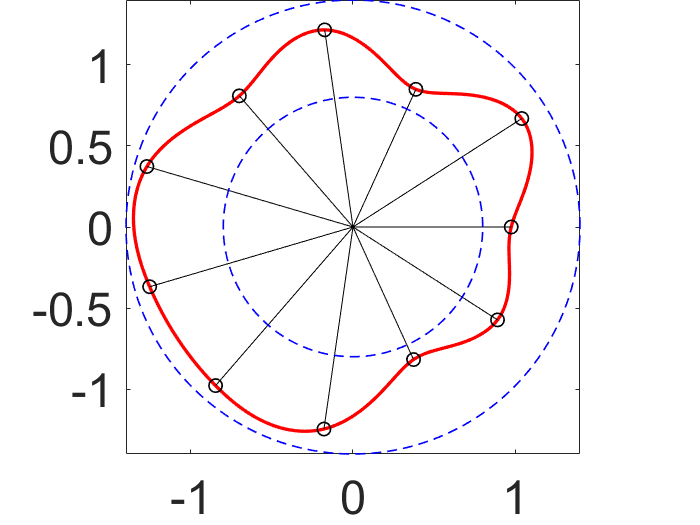}}
	\caption{Illustration of the randomly generated shapes.}\label{fig:Random}
\end{figure}

By taking $\omega=9,$ we exhibit the reconstruction subject to different initial guesses in \Cref{fig:RandomResult}.
From \Cref{fig:RandomResult}, we find that the non-symmetric obstacle can be well reconstructed by the novel Newton-type method.  Though with different initial guesses, the algorithms perform satisfactorily. In addition, we find that the convex parts of the non-symmetric obstacles are better reconstructed compared with the non-convex part, which is because the convex domain can be illuminated more adequately and the total field brings us more geometry information.
\begin{figure}
	\centering
	\subfigure[]{\includegraphics[width=0.3\textwidth]{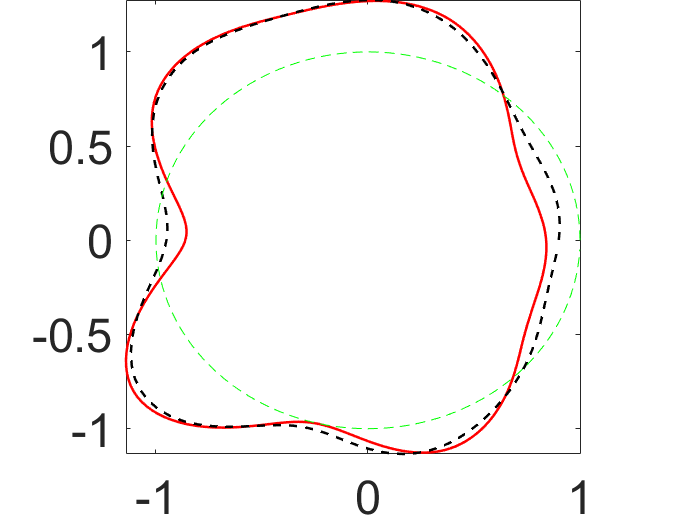}}
	\subfigure[]{\includegraphics[width=0.31\textwidth]{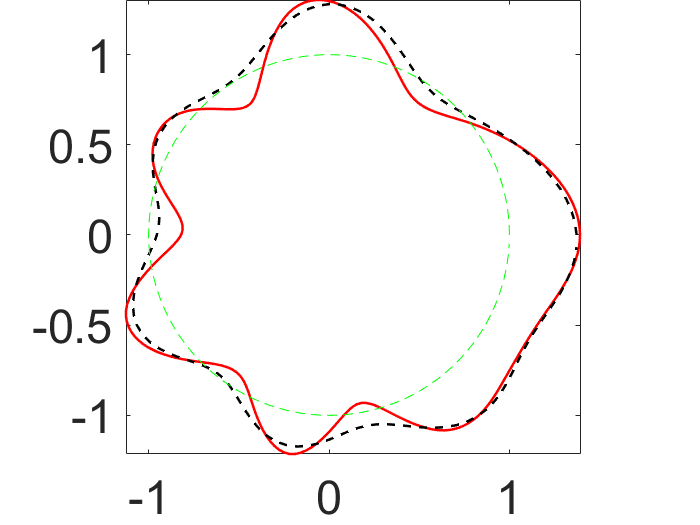}}
	\subfigure[]{\includegraphics[width=0.3\textwidth]{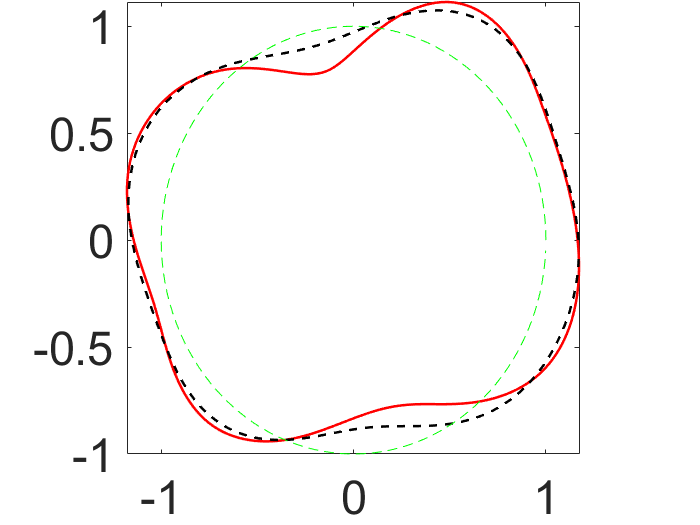}}
	\subfigure[]{\includegraphics[width=0.3\textwidth]{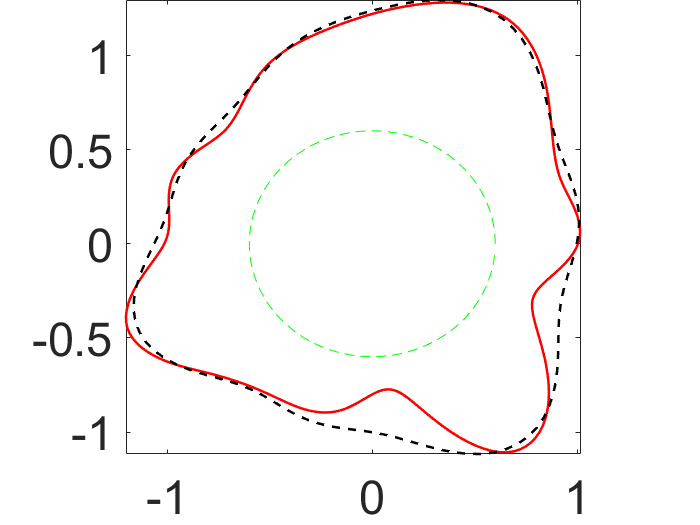}}
	\subfigure[]{\includegraphics[width=0.31\textwidth]{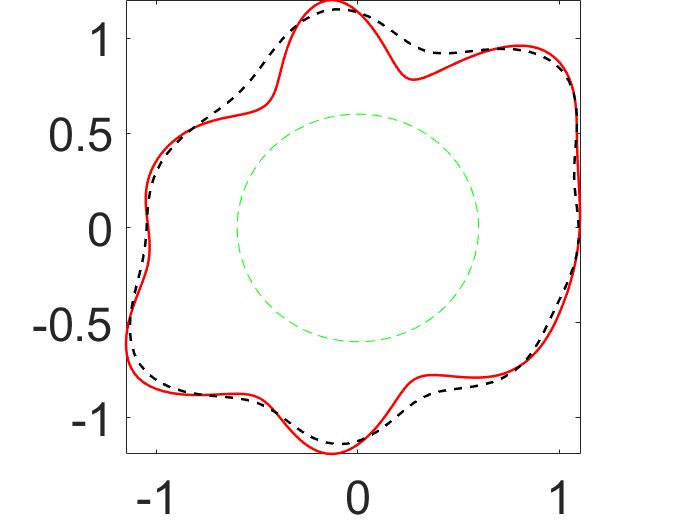}}
	\subfigure[]{\includegraphics[width=0.3\textwidth]{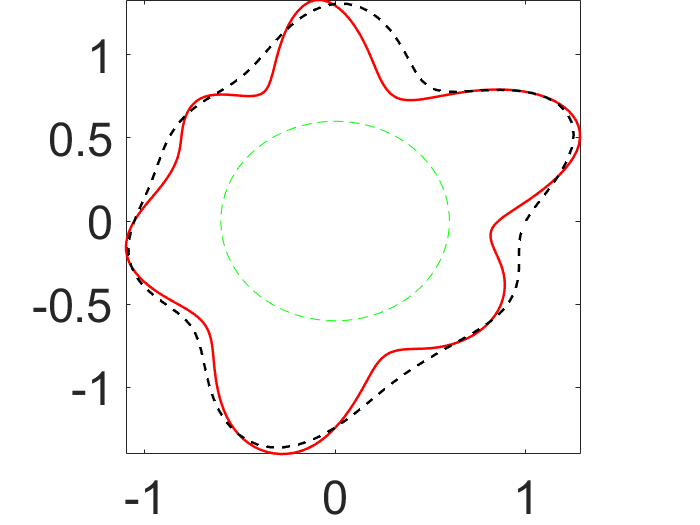}}
	\caption{Reconstruct the obstacles of non-symmetric shapes.}\label{fig:RandomResult}
\end{figure}

In this subsection, we test the performance of the novel Newton method by reconstructing the obstacles from the scattered field data through two numerical experiments. Though no forward solver is involved in the Newton method, the reconstruction is rather accurate.


\subsection{Co-inversion problem}\label{subsec:co_inversion}
By integrating Algorithm 1 with Algorithm 2, this subsection is devoted to the co-inversion of the obstacle-source pair from the total field. In this subsection, we assume that the polarization direction is fixed for all the sources in each example and concentrate on the reconstruction of the source locations and the shape of the obstacle. Throughout the subsection, the initial guess for the obstacle is chosen to be the unit circle centered at the origin. The polarizations are chosen to be $\bm{p}=(\cos(\pi/5),\sin(\pi/5))^\top$ and $\bm{q}=(\cos(\pi/4),\sin(\pi/4))^\top.$

In the figures about the reconstruction, the green solid lines represent the exact boundary of the obstacle, the blue dashed lines stand for the reconstructed boundary, the black `$+$' markers, and the small red circles denote the exact and the reconstructed source points, respectively.

\begin{example}
In this example, we consider the reconstruction of the $L$-leaf ($L=3,\,5$) shaped obstacle together with its excitation source points. 

In \Cref{fig:pear-iosp3}, we exhibit the reconstruction of the 3-leaf obstacle and three source points. In \Cref{fig:pear-iosp3}(a)--\Cref{fig:pear-iosp3}(c), we display the imaging function $I_j(y),\,j=1,2,3$ over the sampling domain $\Omega.$ It can be easily seen that each imaging function attains its peak at the source points, and the obstacle is reconstructed overall. As will be seen in the later numerical experiments, when more source points are determined, the quality of reconstruction for the obstacle can be improved.

By taking different angular frequencies, we exhibit the reconstruction of different obstacle-source pairs in \Cref{fig:pear-iosp}, which illustrates that both the obstacle and the source points can be well reconstructed. In addition, it can be seen that the distribution of source points influences the accuracy of obstacle reconstruction. This phenomenon naturally reflects the wave fields interplay between the source and the scatterer.

Next, we compare the reconstructed polarization directions with the exact one $\bm{p}=(\cos\frac{\pi}{5},\sin\frac{\pi}{5})\approx(0.8090, 0.5878)$. In \Cref{tab:polar}, we list the polarizations corresponding to \Cref{fig:pear-iosp}.  \Cref{tab:polar} shows that the polarization can be also well-reconstructed. Therefore, our method has the capability of reconstructing the obstacle, its excitation source points as well as the polarization direction from the total field.

In \Cref{fig:pear-iosp2}, we investigate the situation where the sources are spatially confined to a limited-angle sector region with respect to the obstacle. Clearly, all the source points are well recovered but only the illuminated part of the obstacle can be easily reconstructed. The recoveries of the `shadow regions' are of reasonably low resolution because the scatterer is not encircled by the sources and thus the geometry information is inadequately perceived by the sensors from the back. 
\end{example}

\begin{figure}
	\centering
	\subfigure[]{\includegraphics[width=0.24\textwidth]{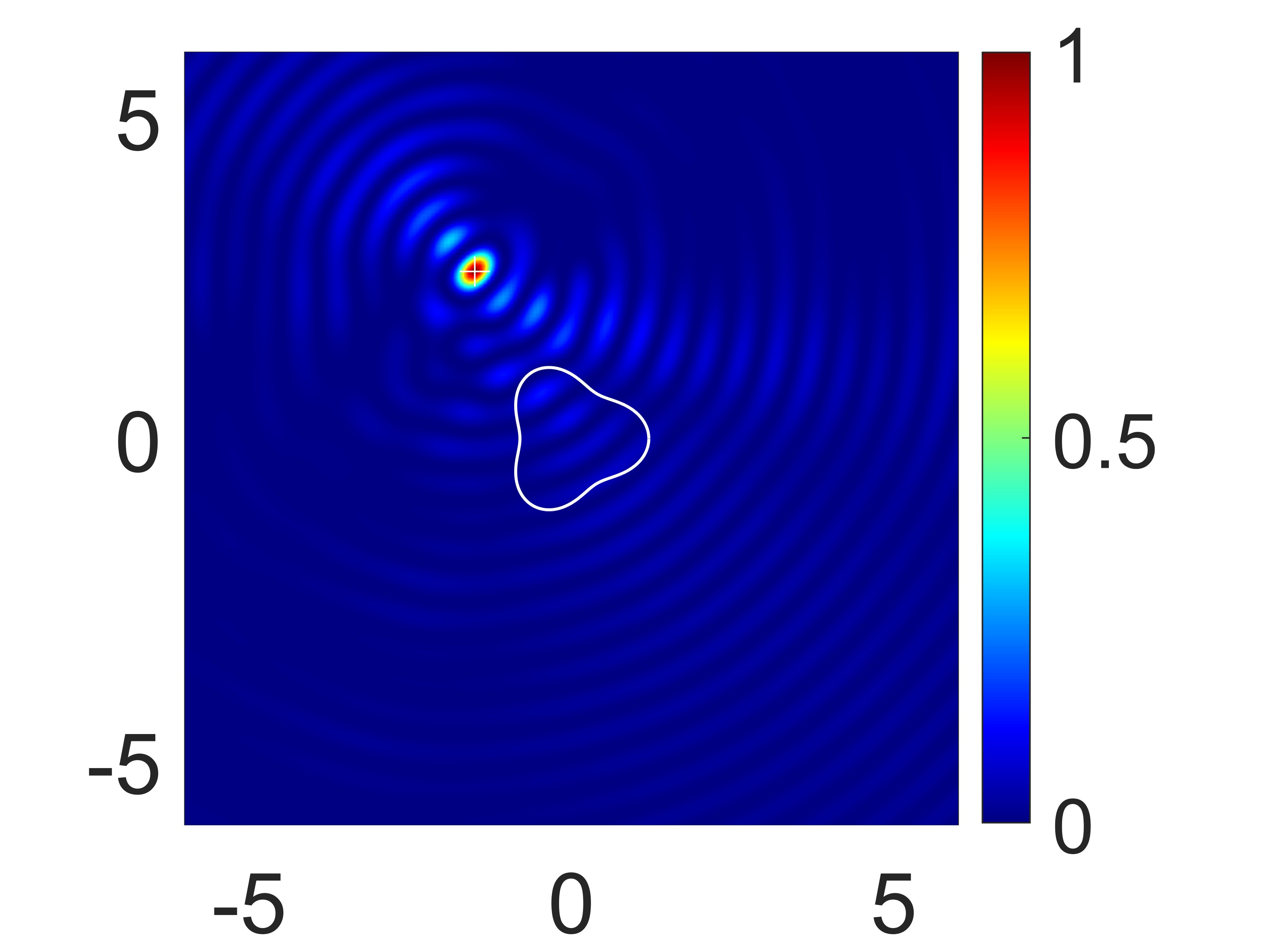}}
	\subfigure[]{\includegraphics[width=0.24\textwidth]{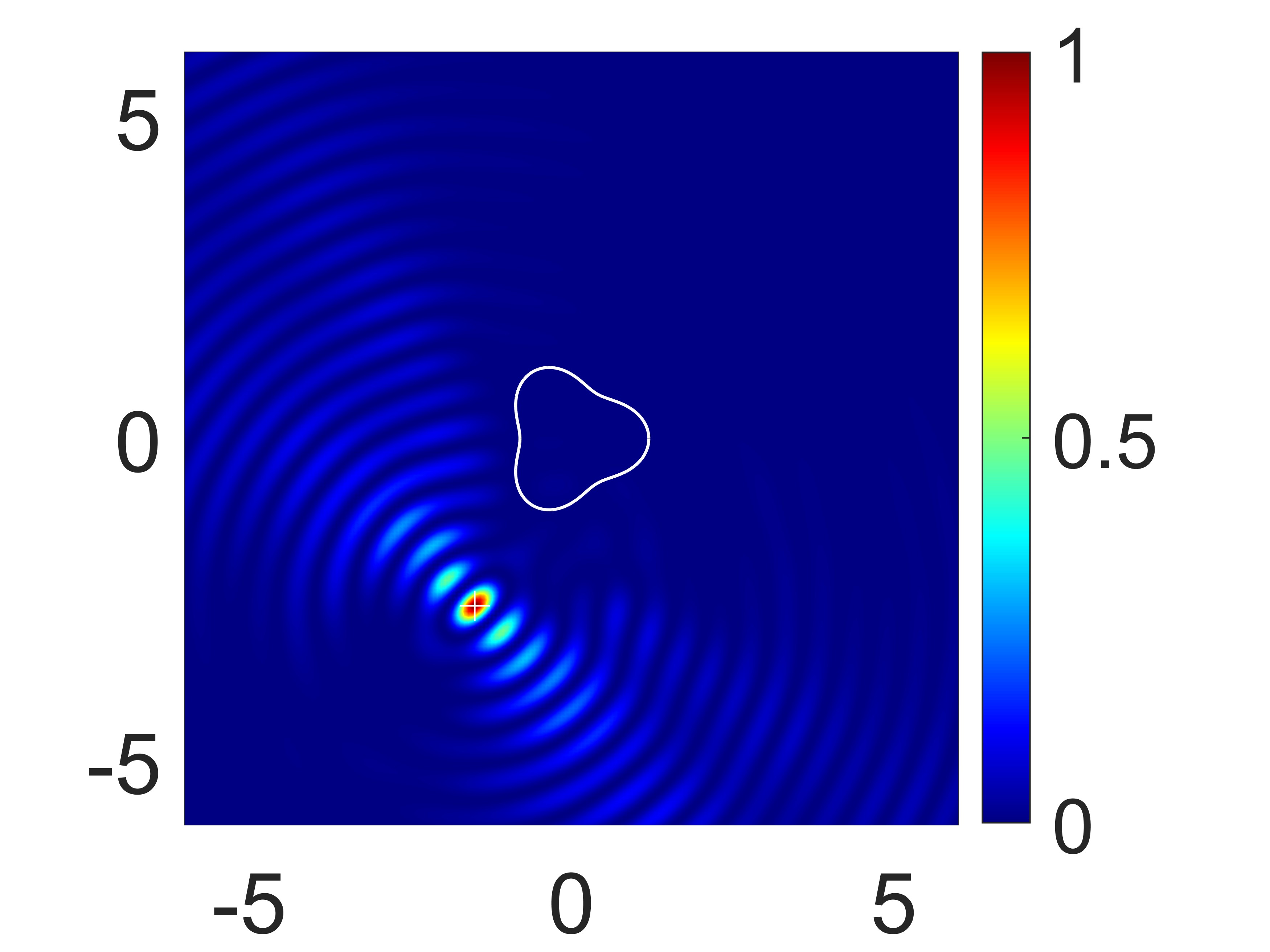}}
	\subfigure[]{\includegraphics[width=0.24\textwidth]{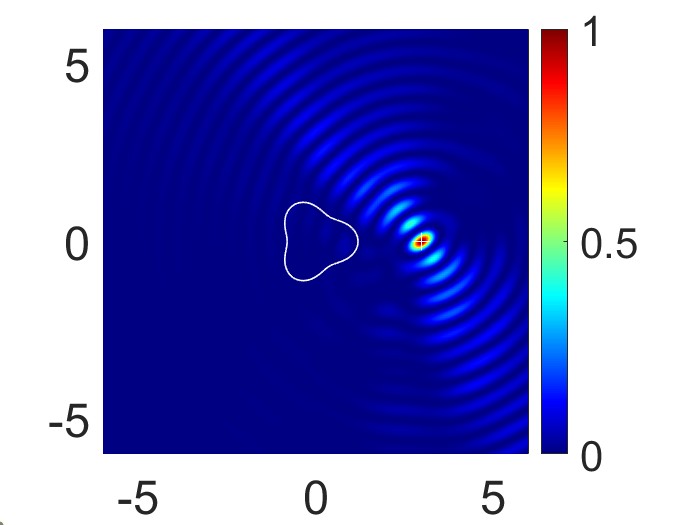}}
	\subfigure[]{\includegraphics[width=0.24\textwidth]{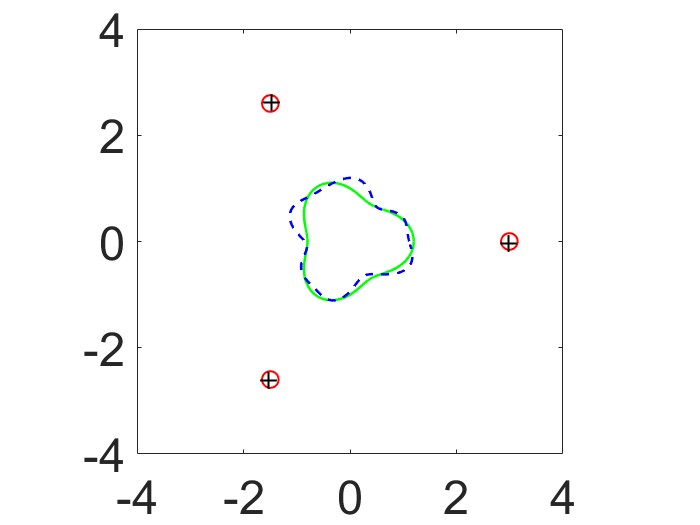}}
	\caption{Reconstruction of the obstacle and source points of different distributions.}\label{fig:pear-iosp3}
\end{figure}

\begin{figure}
	\centering
	\subfigure[]{\includegraphics[width=0.3\textwidth]{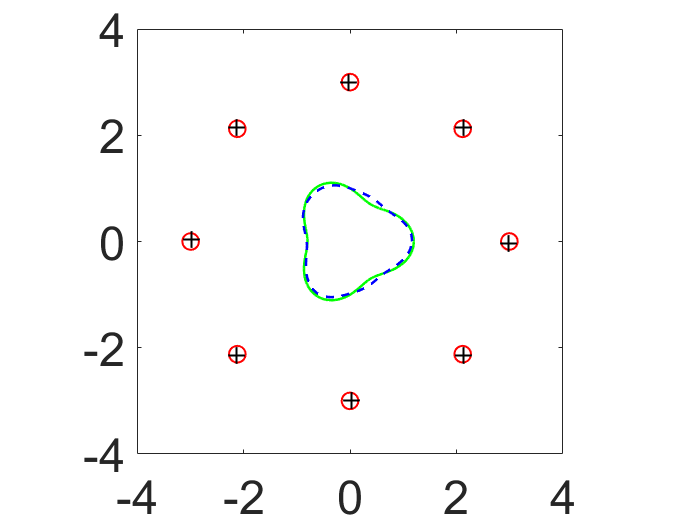}}
	\subfigure[]{\includegraphics[width=0.3\textwidth]{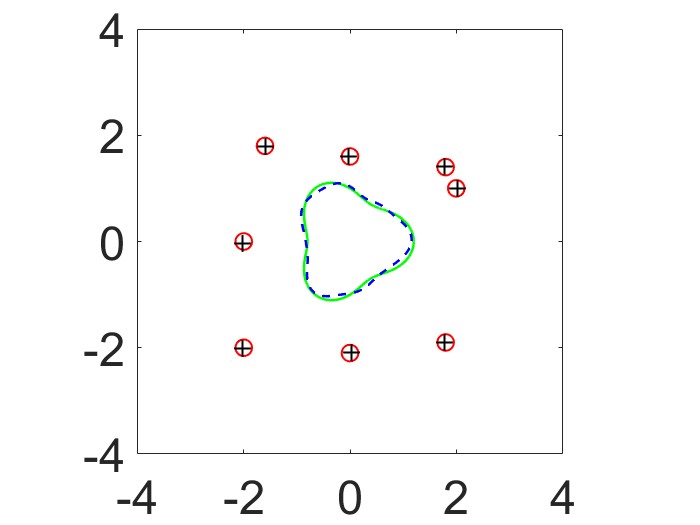}}
	\subfigure[]{\includegraphics[width=0.3\textwidth]{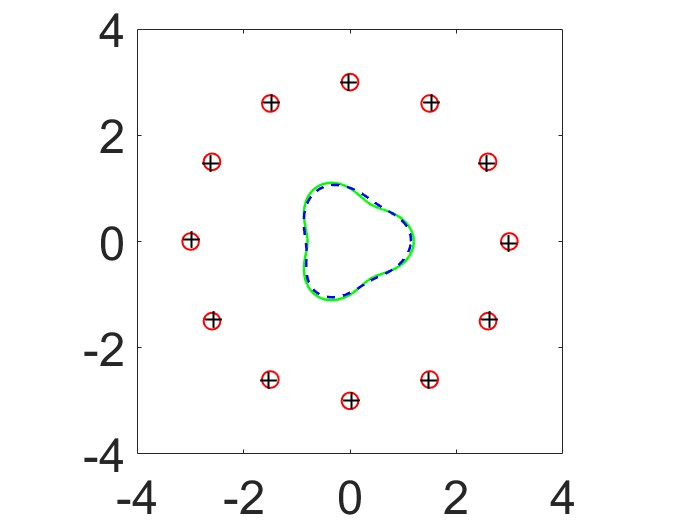}} \\
	\subfigure[]{\includegraphics[width=0.3\textwidth]{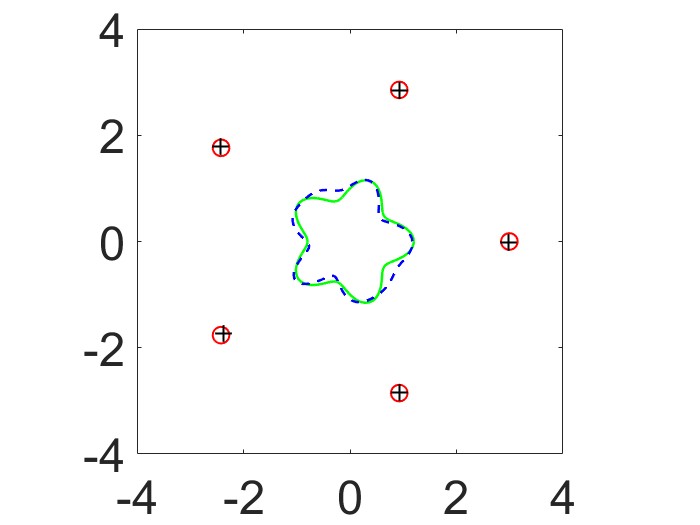}}	
	\subfigure[]{\includegraphics[width=0.3\textwidth]{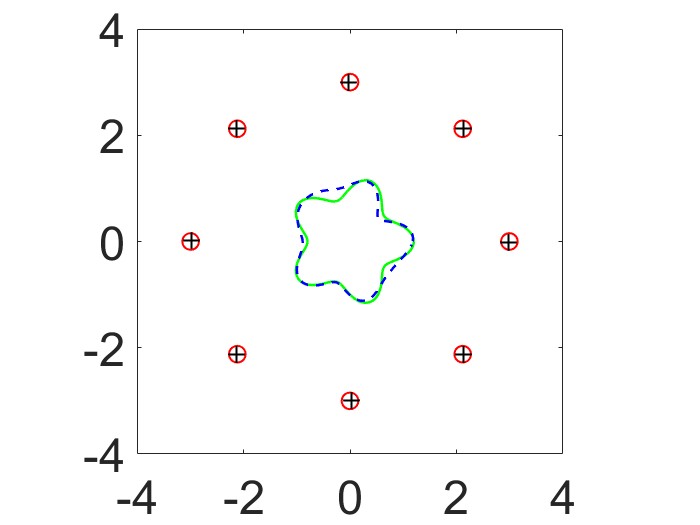}}	
	\subfigure[]{\includegraphics[width=0.3\textwidth]{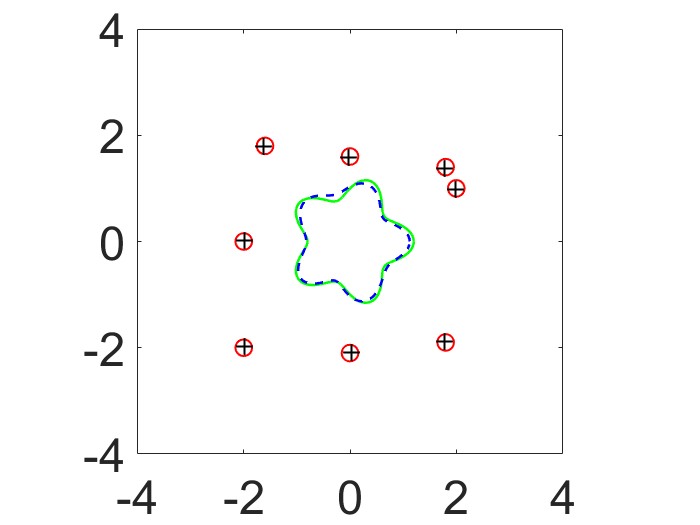}}
	\caption{Reconstruction of the obstacle and source points of different distributions. Top row: $\omega=6$; Bottom row: $\omega=9$.}\label{fig:pear-iosp}
\end{figure}

\begin{figure}
	\centering
       \subfigure[]{\includegraphics[width=0.3\textwidth]{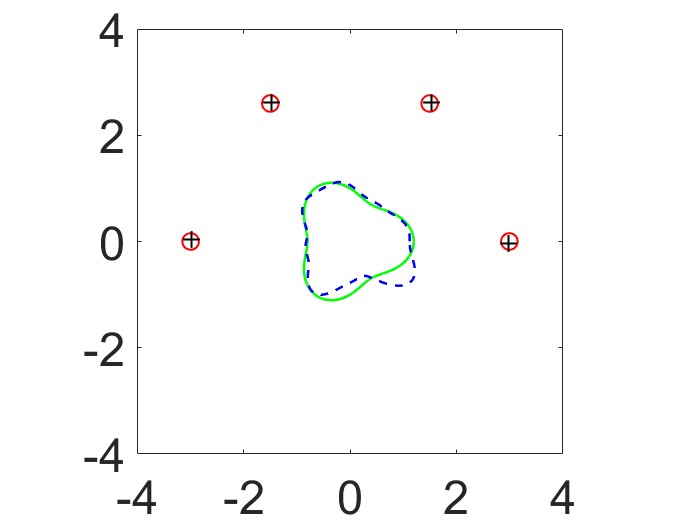}}
     \subfigure[]{\includegraphics[width=0.3\textwidth]{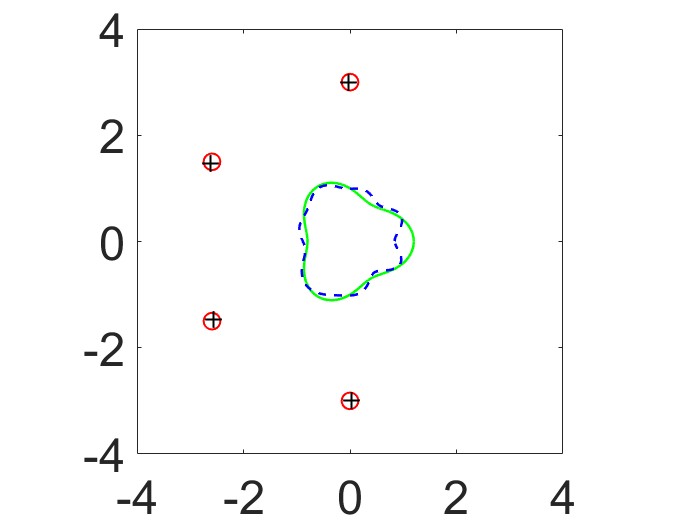}}
     \subfigure[]{\includegraphics[width=0.3\textwidth]{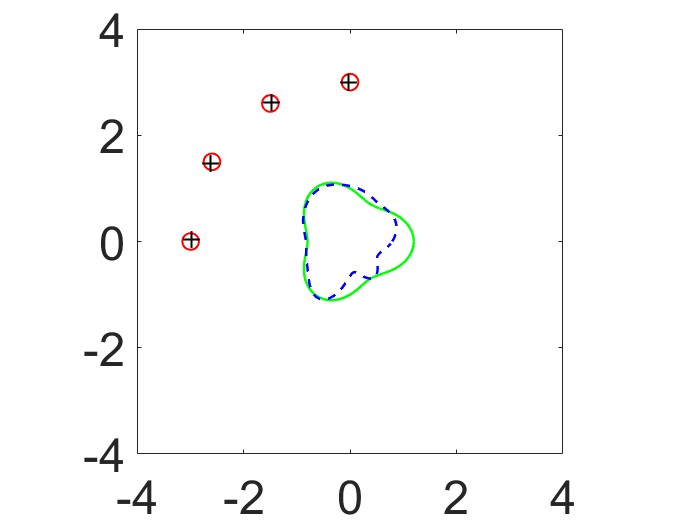}}
\caption{Reconstruction of the obstacle and source points with partial illuminations.}\label{fig:pear-iosp2}
\end{figure}

\begin{table}
	\caption{Reconstruction of the polarizations in \Cref{fig:pear-iosp}}\label{tab:polar}
	\centering
	\begin{tabular}{c|ccc}
		\toprule
		         Figure          & \ref{fig:pear-iosp}(a) & \ref{fig:pear-iosp}(b) & \ref{fig:pear-iosp}(c) \\
		$\tilde{\bm{q}}$ &   $(0.8088,0.5881)$    &   $(0.8172, 0.5763)$   &   $(0.8104, 0.5859)$    \\
		\midrule
		         Figure          & \ref{fig:pear-iosp}(d) & \ref{fig:pear-iosp}(e) & \ref{fig:pear-iosp}(f) \\
		$\tilde{\bm{q}}$ &   $(0.8083, 0.5887)$    &   $(0.8099, 0.5881)$    &   $(0.7992, 0.6060)$    \\
		\bottomrule
	\end{tabular}
\end{table}

\begin{example}
	This example concerns the reconstruction of the kite-shaped obstacle and the corresponding sources. The reconstructions are depicted in \Cref{fig:kite-iosp} to illustrate the influence of the number of source points. These results indicate that the source locations can always be favorably identified. In addition, the shape of the obstacle can be also satisfactorily recovered except for the case that only a few illuminating sources are available, for instance, see \Cref{fig:kite-iosp}(d). We would like to emphasize that, contrary to the typical inverse scattering problems where the incident waves usually can be artificially deployed to cater to reconstructions,  the source points here are in general not at our disposal in the co-inversion problem. Hence the less accurate reconstruction such as \Cref{fig:kite-iosp}(d) is due to the insufficient amount of information and the lack of data may inevitably occur. 

\end{example}

\begin{figure}
	\centering
	\subfigure[]{\includegraphics[width=0.3\textwidth]{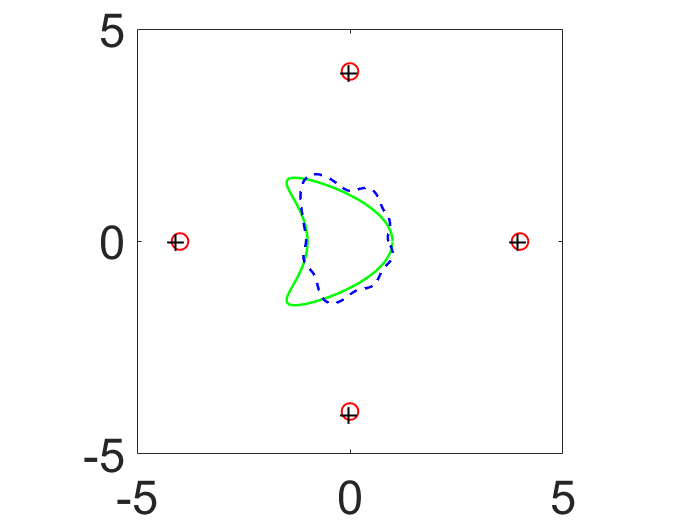}}
	\subfigure[]{\includegraphics[width=0.3\textwidth]{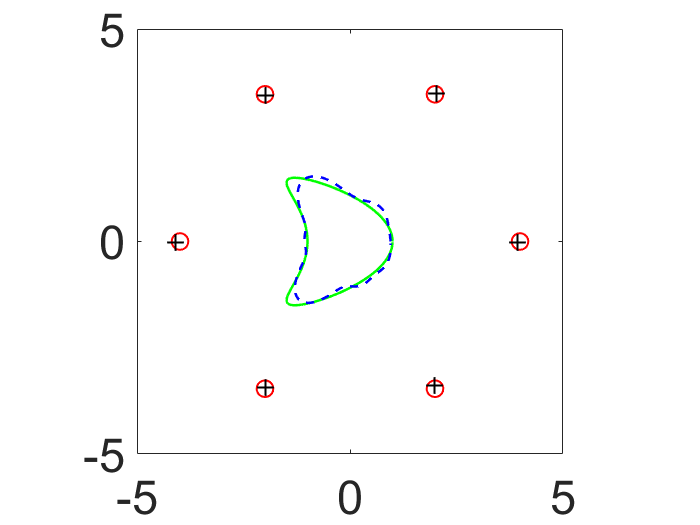}}
	\subfigure[]{\includegraphics[width=0.3\textwidth]{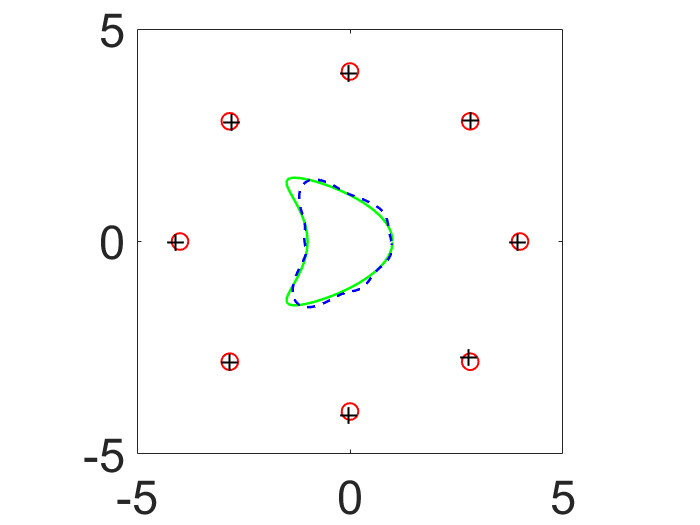}}
	\caption{Coinversion of the kite and source points with $\omega=6$. }\label{fig:kite-iosp}	
\end{figure}

We list the computational CPU time for the reconstructions in \Cref{table:CPU} to quantify the efficiency of the proposed algorithm. All of the codes in the numerical experiments are written in MATLAB and run on an Intel Core 2.6 GHz laptop. The computational cost is low because we do not need to solve any forward problem in each iteration. More importantly, this sheds light on the feasibility of a computationally affordable extension of our algorithm to 3D reconstructions.


\begin{table}
	\caption{Computing time for the reconstructions of the obstacles in \Cref{fig:pear-iosp} and  \Cref{fig:kite-iosp}}\label{table:CPU}
	\centering
	\begin{tabular}{c|cccccc}
		\toprule
		Figure & \ref{fig:pear-iosp}(a) & \ref{fig:pear-iosp}(b) & \ref{fig:pear-iosp}(c)  & \ref{fig:kite-iosp}(a) & \ref{fig:kite-iosp}(b) & \ref{fig:kite-iosp}(c) \\ \hline
		CPU(s) &          29.17          &          74.32          &          81.61         &    39.43               &      38.59            &  39.35                   \\
		iter  &           29            &           75            &           81           &     38                  &    37                  &38                       \\
		\bottomrule
	\end{tabular}
\end{table}

\begin{example}
	In the last example, we test the co-inversion of the non-symmetric obstacle and its excitation source points from the noisy total field. The non-symmetric obstacle is created as the procedure described in Example~5.1.2. The reconstructions with $\omega=8$ are displayed in \Cref{fig:suijiIOSP}.
\end{example}

\begin{figure}
	\centering
	\subfigure[]{\includegraphics[width=0.3\textwidth]{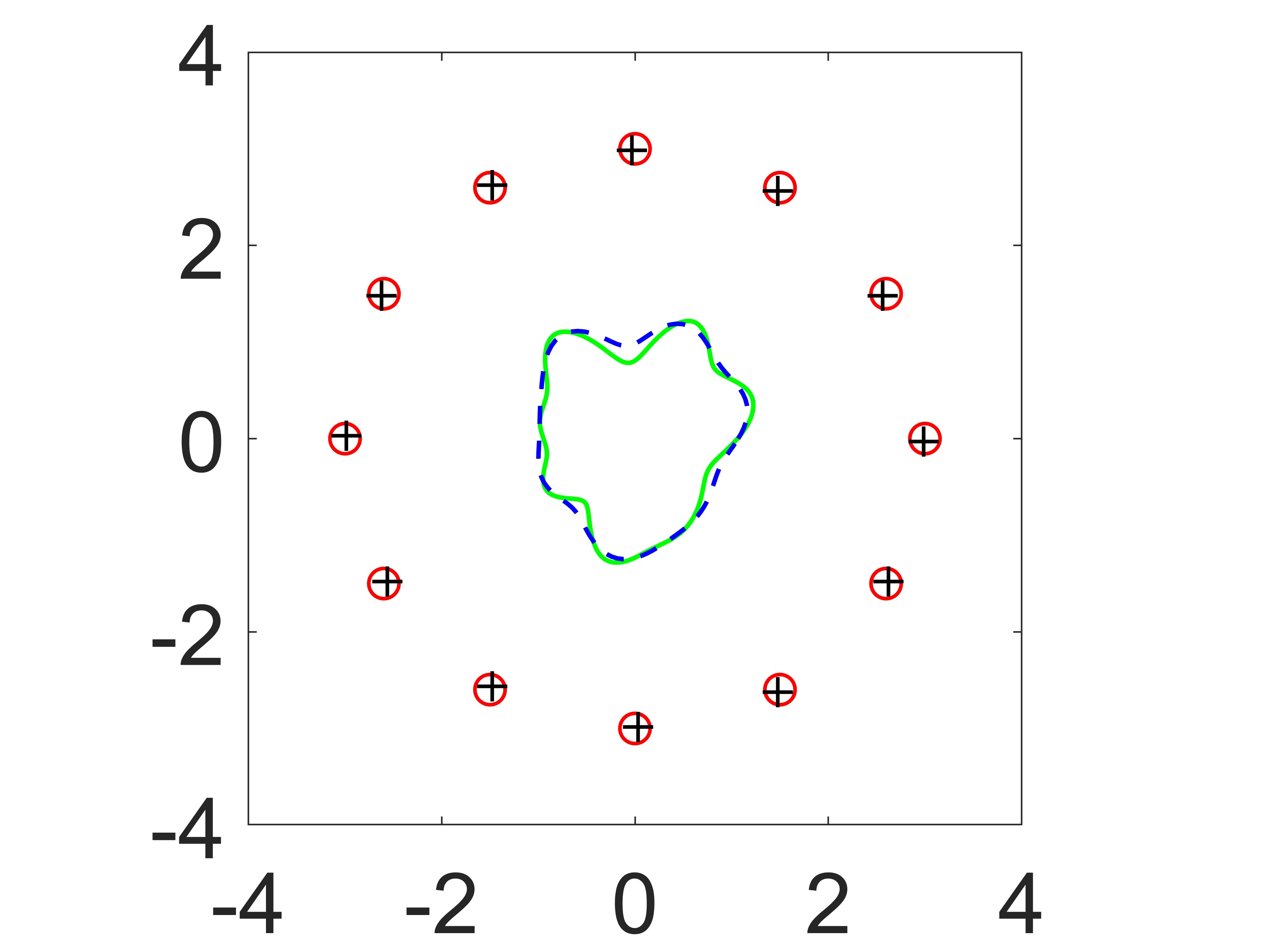}}
	\subfigure[]{\includegraphics[width=0.3\textwidth]{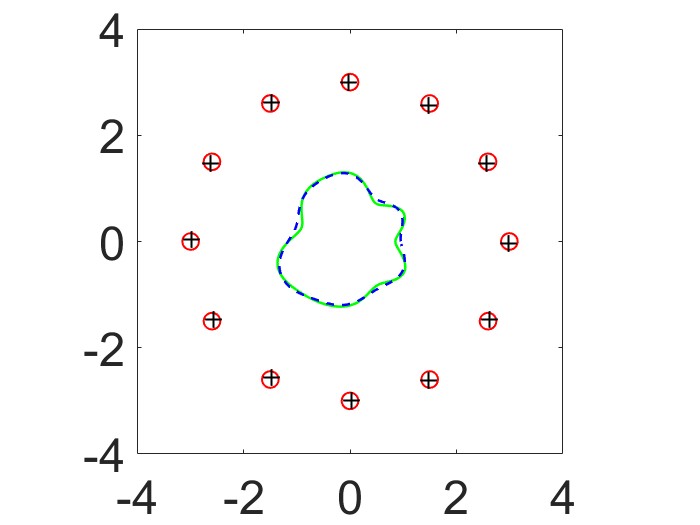}}
	\subfigure[]{\includegraphics[width=0.3\textwidth]{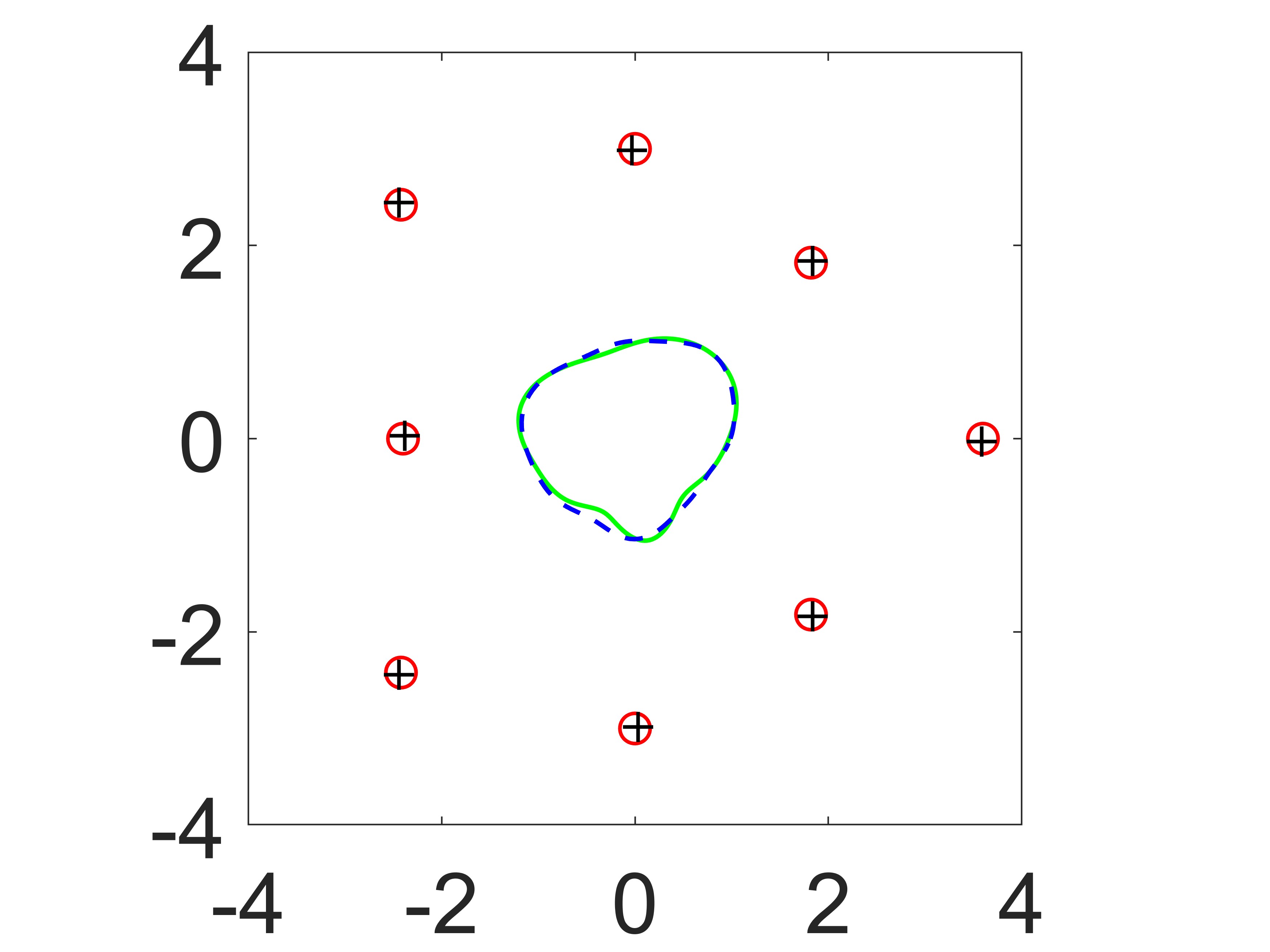}}
	\subfigure[]{\includegraphics[width=0.3\textwidth]{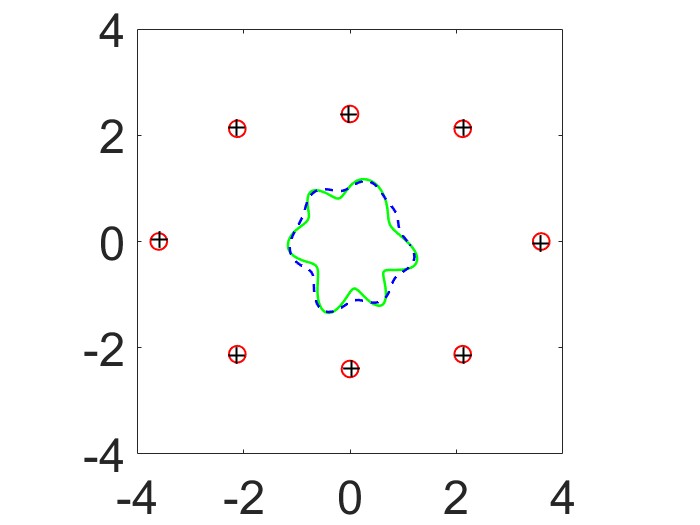}}
	\subfigure[]{\includegraphics[width=0.3\textwidth]{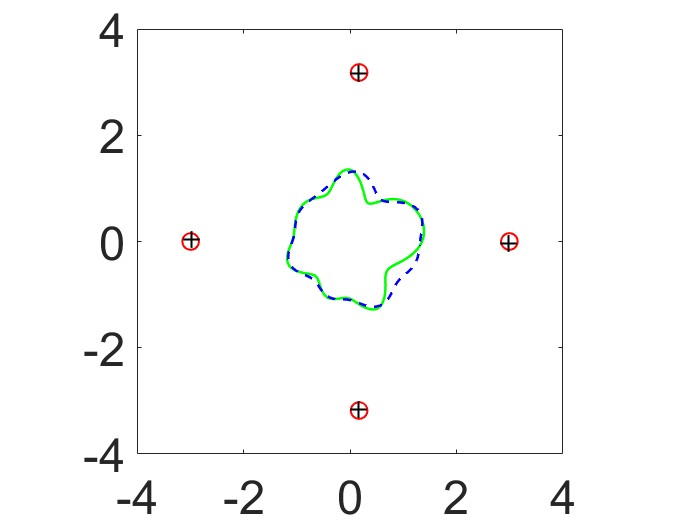}}
	\subfigure[]{\includegraphics[width=0.3\textwidth]{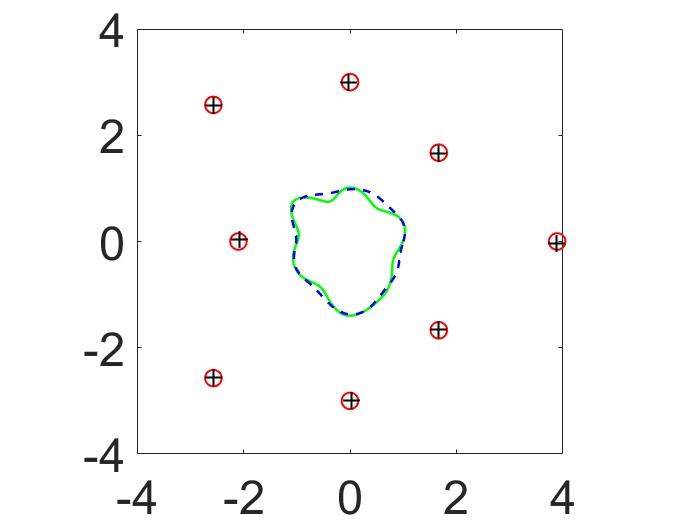}}
	\caption{Reconstruction of the non-symmetric obstacle and source points.}\label{fig:suijiIOSP}
\end{figure}

As can be seen in \Cref{fig:suijiIOSP}, all the source points are reconstructed well. For the obstacle, the convex part can be better recovered compared with the concave part because the convex part can be better illuminated.


\section{Conclusions}\label{sec:conclusion}
We propose a novel method for the elastic co-inversion problem of determining the rigid obstacle together with its excitation point sources from the measured total field. A direct sampling method is proposed to recover the source points and the associated polarization directions from the total field measurements.  After reformulating the co-inversion problem into the inverse obstacle subproblem by subtracting the incident wave due to the reconstructed source points from the total field, we propose a novel Newton-type method by approximating the scattered field as suitable layer potentials. Theoretically, we analyze the uniqueness of the co-inversion, and the indicating behaviors of the indicator functions and give an explicit formula for the shape derivative. We remark that the indicator functions and the Newton-type method are not only used in the co-inversion problem but also useful in their own right for solving inverse source and inverse scattering problems. A noteworthy advantage of our algorithm is the complete independence of any solution to the forward problem, thus our method is easy-to-implement and fast. Finally, several numerical experiments are conducted and the results show that the proposed sampling-iterative method performs well in the simultaneous reconstruction of the sources and the obstacle from the noise total field data.

We believe that the basic idea of this work can be applied to many other similar inverse scattering models, for instance, imaging in acoustics and electromagnetism. Due to the limited time and computing resources, three-dimensional numerical experiments are currently not conducted. In addition, theoretical issues such as the convergence of the iteration method have not yet been mathematically analyzed. Our ongoing and future works would consist of further attempts at these related directions. 

\section*{Acknowledgments}

Yan Chang and Yukun Guo are supported by NSFC grant 11971133. Hongyu Liu is supported by the Hong Kong RGC General Research Funds (projects 12302919, 12301420, and 11300821),  the NSFC/RGC Joint Research Fund (project N\_CityU 101/21), the France-Hong Kong ANR/RGC Joint Research Grant, A-CityU203/19. Deyue Zhang is supported by NSFC grant 12171200.

\section*{Appendix: Multiplicities of the Dirichlet eigenvalues for $-\Delta^*$ inside a ball}
\renewcommand\theequation{A.\arabic{equation}}

This appendix presents the multiplicities of the Dirichlet eigenvalues for the negative Lam\'{e} operator inside $B_R$. For $n=0,1, 2, \cdots$, denote by $J_n$ and $j_n$ the Bessel function and spherical Bessel function of order $n$, respectively. 

\begin{lemma}
	 For $r=|x|$ and $n=0,1,2,\cdots$, let
		\begin{align}
		\label{eq:Pn} &  P_n(x) =
		\begin{vmatrix}
			k_p r J'_n(k_p r) & \mathrm{i}n J_n(k_s r) \\[6pt] 
			\mathrm{i}n J_n(k_p r) & -k_s r J'_n(k_s r)
		\end{vmatrix}, \\
		\label{eq:Qn} & Q_n(x)=
		\begin{vmatrix}
			k_p r j'_n(k_p r) & n(n+1) j_n(k_s r) \\[6pt] 
			j_n(k_p r) & j_n(k_s r)+k_s r j'_n(k_s r)
		\end{vmatrix}.
	\end{align}
   Then the sum of the multiplicities of the Dirichlet eigenvalues for $-\Delta^*$ inside $B_R$ is given by
	\begin{align}\label{eq:N}
	N_0:=\left\{
	\begin{aligned}
	&\sum_{p_{nl}<R}(2n+1),\quad & d=2,\\
	&\sum_{t_{nl}<k_s R}(2n+1)+\sum_{q_{nl}<R}(2n+1),\quad & d=3,
	\end{aligned}
	\right.
	\end{align} 
    where $p_{nl}$, $q_{nl}$ and $t_{nl}(l=0,1,\cdots; n=0,1,\cdots)$ are respectively the $l$-th positive zero of $P_n, Q_n$, and $j_n$, namely, $P_n(p_{nl})=Q_n(q_{nl})=j_n(t_{nl})=0$.
\end{lemma}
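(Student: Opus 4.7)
The plan is to reduce the vector eigenvalue problem $-\Delta^*\bm{u}=\omega^2\bm{u}$ in $B_R$ with $\bm{u}|_{\partial B_R}=\bm{0}$ to scalar Helmholtz problems via the Helmholtz decomposition \eqref{eq:split}, to solve each by separation of variables in polar/spherical coordinates, and to reassemble the Dirichlet boundary condition into a small linear system whose compatibility determinant turns out to be exactly $P_n$, $Q_n$, or a factor of $j_n$. Since any Dirichlet eigenfunction $\bm{u}$ admits \eqref{eq:split} with potentials solving \eqref{eq:Helmholtz2D}--\eqref{eq:Helmholtz3D}, and since $\bm{u}$ must be smooth at the origin, only Bessel (respectively spherical Bessel) functions of the first kind survive in the separated expansion; Neumann and Hankel parts are discarded.

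\textbf{Two-dimensional case.} Expand in polar coordinates $\phi=\sum_{n\in\mathbb{Z}} a_n J_n(k_p r)\mathrm{e}^{\mathrm{i}n\theta}$ and $\psi=\sum_{n\in\mathbb{Z}} b_n J_n(k_s r)\mathrm{e}^{\mathrm{i}n\theta}$, then write the Dirichlet condition $\nabla\phi+\mathbf{curl}\psi=\bm{0}$ on $r=R$ in its radial and tangential components. Isolating the $n$-th angular mode produces
\begin{equation*}
\begin{pmatrix}
k_p R J_n'(k_p R) & \mathrm{i}n J_n(k_s R) \\
\mathrm{i}n J_n(k_p R) & -k_s R J_n'(k_s R)
\end{pmatrix}
\begin{pmatrix} a_n \\ b_n \end{pmatrix}=\bm{0},
\end{equation*}
whose determinant coincides with $P_n(x)$ in \eqref{eq:Pn} at $r=R$. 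Thus the eigenvalue condition is precisely $P_n(R)=0$, and the Fourier-mode degeneracy supplies the multiplicity factor entering \eqref{eq:N}.

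\textbf{Three-dimensional case.} Expand $\phi$ in $j_n(k_p r)Y_n^m$ and decompose the divergence-free vector potential by Debye potentials as $\bm{\psi}=\nabla\times(\mathbf{r}T)+\nabla\times\nabla\times(\mathbf{r}P)$, with $T$, $P$ themselves expanded in $j_n(k_s r)Y_n^m$. On each sphere $r=\mathrm{const}$, the field $\bm{u}=\nabla\phi+\mathbf{curl}\bm{\psi}$ splits orthogonally into a radial component, a poloidal tangential component, and a toroidal tangential component. The Dirichlet condition therefore decouples into a purely toroidal equation that depends only on $T$ and reduces to $j_n(k_s R)=0$ (yielding the zeros $t_{nl}$), together with a $2\times 2$ system for the $\phi$- and $P$-coefficients coming from the radial and poloidal tangential constraints. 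Using the standard identity $\mathbf{r}\cdot(\nabla\times\nabla\times(\mathbf{r}u))=n(n+1)u$ on spherical harmonics of degree $n$, this system has coefficient matrix
\begin{equation*}
\begin{pmatrix}
k_p R j_n'(k_p R) & n(n+1) j_n(k_s R) \\
j_n(k_p R) & j_n(k_s R)+k_s R j_n'(k_s R)
\end{pmatrix},
\end{equation*}
whose determinant is $Q_n(x)$ from \eqref{eq:Qn} at $r=R$, producing the zeros $q_{nl}$. Each such zero, and each $t_{nl}$, inherits multiplicity $2n+1$ from the space of spherical harmonics $Y_n^m$, $|m|\le n$. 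Summing the multiplicities over zeros below $R$ (respectively below $k_s R$) assembles the 3D formula in \eqref{eq:N}. Completeness, i.e.\ that no eigenvalues are missed, follows from the spectral theorem for the elliptic operator $-\Delta^*$ together with the fact that the Helmholtz decomposition exhausts divergence/curl data of smooth vector fields on the ball.

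\textbf{Main obstacle.} The essential difficulty lies in the three-dimensional step: decomposing $\bm{\psi}$ via Debye potentials, identifying which components of the boundary condition couple the P-wave and the poloidal S-wave modes, and reorganizing the resulting linear relation so that the coefficient matrix matches the form of $Q_n$ in \eqref{eq:Qn}. The repeated use of vector spherical harmonic identities, in particular $\mathbf{r}\cdot(\nabla\times\nabla\times(\mathbf{r}u))=n(n+1)u$ for a spherical harmonic $u$ of degree $n$, is the delicate point; everything else is bookkeeping.
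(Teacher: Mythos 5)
Your proposal is correct and follows essentially the same route as the paper's proof: Helmholtz decomposition, Fourier--Bessel (respectively spherical-harmonic) separation of variables, and reduction of the Dirichlet condition on $\Gamma_R$ to the vanishing of the $2\times2$ determinants $P_n$ and $Q_n$ together with the decoupled toroidal condition $j_n(k_sR)=0$. Your Debye-potential parametrization of $\bm{\psi}$ is equivalent, after taking $\mathbf{curl}$, to the paper's expansion of $\bm{v}_s$ in $M_n^m$ and $\nabla\times M_n^m$, so no genuinely new idea or gap is introduced.
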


\begin{proof}
	We consider the Dirichlet eigenvalue problem inside $B_R\subset \mathbb{R}^d$:
	\begin{equation}\label{eq:interior}
     \begin{cases}
	\Delta^*\bm{v}+\omega^2\bm{v}=\bm{0}, & \text{in}\ B_R, \\
	\hspace{1.5cm}\bm{v}=\bm{0},  & \text{on}\ \Gamma_R.
	\end{cases}
	\end{equation}

	(i) $d=2$. In the 2D case, the eigenfunction $\bm{v}$ of \eqref{eq:interior} can be split by \eqref{eq:split} into $\bm{v}=\bm{v}_p+\bm{v}_s=\nabla \phi+\mathbf{curl}\psi$ where the scalar functions $\phi$ and $\psi$ can be given by 
	\begin{equation}
	\phi(x)=\sum_{n=-\infty}^\infty \phi_n J_n(k_p|x|)\mathrm{e}^{\mathrm{i}n\theta},\quad
	\psi(x)=\sum_{n=-\infty}^\infty \psi_n J_n(k_s|x|)\mathrm{e}^{\mathrm{i}n\theta}.
	\end{equation}
   with the polar coordinate $x=|x|(\cos\theta,\sin\theta)^\top$ and the coefficients $\phi_n$ and $\psi_n$, respectively. 
   
	Define $\bm{e}_\rho=(\cos\theta,\sin\theta)^\top$ and $\bm{e}_\theta=(-\sin\theta,\cos\theta)^\top$, then from
	$$
	\nabla w=\frac{\partial w}{\partial \rho}\bm{e}_\rho+\frac{1}{|x|}\frac{\partial w}{\partial\theta}\bm{e}_\theta,\quad 
	\mathbf{curl} w=\frac{1}{|x|}\frac{\partial w}{\partial\theta}\bm{e}_\rho-\frac{\partial w}{\partial \rho}\bm{e}_\theta,
	$$
	we have 
	\begin{align*}
	& \bm{v}_p(x)=\sum_{n=-\infty}^\infty \phi_n\nabla\left(J_n(k_p|x|)\mathrm{e}^{\mathrm{i}n\theta}\right)=\sum_{n=-\infty}^\infty\phi_n\left(k_pJ'_n(k_p|x|)\mathrm{e}^{\mathrm{i}n\theta}\bm{e}_\rho+\frac{\mathrm{i}n}{|x|}J_n(k_p|x|)\mathrm{e}^{\mathrm{i}n\theta}\bm{e}_\theta\right),\\
	& \bm{v}_s(x)=\sum_{n=-\infty}^\infty \psi_n\mathbf{curl}\left(J_n(k_s|x|)\mathrm{e}^{\mathrm{i}n\theta}\right)=\sum_{n=-\infty}^\infty\psi_n\left(\frac{\mathrm{i}n}{|x|}J_n(k_s|x|)\mathrm{e}^{\mathrm{i}n\theta}\bm{e}_\rho-k_s J'_n(k_s|x|)\mathrm{e}^{\mathrm{i}n\theta}\bm{e}_\theta\right).
	\end{align*}
By the Dirichlet boundary condition $(\bm{v}_p+\bm{v}_s)|_{\Gamma_R}=\bm{0}$ and the orthogonality $\bm{e}_\rho\cdot\bm{e}_\theta=0$, we get	  
$$
\begin{cases}
	k_p R J'_n(k_pR)\phi_n+\mathrm{i}n J_n(k_sR)\psi_n=0, \\
	\mathrm{i}n J_n(k_pR)\phi_n-k_s R J'_n(k_sR)\psi_n=0,
	\end{cases}
	\quad \forall n=1, 2, \cdots.
$$
	Further, by \cite[Theorem 2.7]{CK19}, the multiplicity of the Dirichlet eigenvalues is given by
	\[
	\sum_{q_{nl}<R}(2n+1),
	\]
	with $q_{nl}$ the $l$-th zero of $P_n$ as defined in \eqref{eq:Pn}.
		
	(ii) $d=3$. Let $\hat{x}=x/|x|\in\mathbb{S}^2$ and $Y_n^m(\hat{x})(m=-n,\cdots, n; \,n=0,1,2,\cdots)$ be the spherical harmonics \cite{CK19}. Similar to the 2D case, we introduce
	\begin{align*}
	& \bm{v}_p(x):=\sum_{n=1}^\infty\sum_{m=-n}^na_n^m\nabla(j_n(k_p|x|)Y_n^m(\hat{x})),\\
	& \bm{v}_s(x):=\sum_{n=1}^\infty\sum_{m=-n}^n\left(b_n^mM_n^m(|x|,\hat{x})+c_n^m\nabla\times M_n^m(|x|,\hat{x})\right),
	\end{align*}
	where $a_n^m,\,b_n^m, c_n^m$ are the coefficients and $M_n^m(|x|,\hat{x})=\nabla\times\left(x j_n(k_s |x|)Y_n^m(\hat{x})\right).$ For later analysis, we explicitly rewrite $\bm{v}$ as follows:
	\begin{align*}
	\bm{v}(x)=&\sum_{n=1}^\infty\sum_{m=-n}^na_n^m\left(k_p j'_n(k_p|x|)Y_n^m(\hat{x})\hat{x}+\frac{j_n(k_p|x|)}{|x|}\text{Grad}Y_n^m(\hat{x})\right)\\
	&+b_n^m j_n(k_s|x|)\text{Grad}Y_n^m(\hat{x})\times\hat{x}\\
	&+c_n^m\left(n(n+1)\frac{j_n(k_s|x|)}{|x|}Y_n^m(\hat{x})\hat{x}+\left(\frac{j_n(k_s|x|)}{|x|}+k_s j'_n(k_s|x|)\right)\text{Grad}Y_n^m(\hat{x})\right),
	\end{align*}	
	where $\mathrm{Grad}$ is the surface gradient.	
	
	From the Dirichlet condition $(\bm{v}_p+\bm{v}_s)|_{\Gamma_R}=\bm{0}$, one can deduce that 
	\begin{equation}\label{eq:abc}
	\left\{
	\begin{aligned}
	& a_n^m k_p j'_n(k_p R)+n(n+1)c_n^m\frac{j_n(k_sR)}{R}=0,\\
	& a_n^m\frac{j_n(k_p R)}{R}+c_n^m\left(\frac{j_n(k_sR)}{R}+k_s j'_n(k_sR)\right)=0,\\
	& b_n^m j_n(k_sR)=0,
	\end{aligned}
	\right.\quad\forall m=-n,\cdots, n; \,n=0,1,2,\cdots
	\end{equation}
	From \eqref{eq:abc}, we see that the multiplicity of the Dirichlet eigenvalues is given by
	$$
	\sum_{t_{nl}<k_s R}(2n+1)+\sum_{q_{nl}<R}(2n+1),
	$$
	such that $j_n(t_{nl})=0$ and $Q_n(q_{nl})=0$ with $Q_n$ defined by \eqref{eq:Qn}.
\end{proof}



\begin{thebibliography}{100}
	
\bibitem{Abd} Abdelaziz B, Badia A. E, Hajj, A. E 2015  Direct algorithm for multipolar sources reconstruction {\it J. Math. Anal. Appl.} {\bf 428} 306--336

\bibitem{Alves} Alvex C, Kress R 2002 On the far-field operator in elastic obstacle scattering {\it SIAM J. Appl. Math.}, {\bf 67} 1--21

\bibitem{Alz} Alzaalig A, Hu G, Liu X, Sun J 2019 Fast acoustic source imaging using multi-frequency sparse data {\it Inverse Probl.} {\bf 36} 025009

\bibitem{TRA} Ammari H, Bretin E, Garnier J et al. 2013 Time-reversal algorithms in viscoelastic media {\it Euro J. Appl. Math.}, {\bf 24} 565--600
	\bibitem{Arens} Arens T 2001 Linear sampling methods for 2D inverse elastic wave scattering {\it Inverse Probl.} {\bf 17} 1445--1464
	
\bibitem{18} Badia A, Nara T 2013 Inverse dipole source problem for time-harmonic Maxwell equations: algebraic algorithm and H\"{o}lder stability {\it Inverse Probl.} {\bf 29} 015007
	
\bibitem{BaoChenLi16} Bao G, Chen C, Li P 2017 Inverse random source scattering for elastic waves {\it SIAM J. Numer. Anal.} {\bf 55} 2616---2643
	
\bibitem{BaoHu} Bao G, Hu G, Sun J, and Yin T 2018 Direct and inverse elastic scattering from anisotropic media {\it J. Math. Pures Appl.} {\bf 117} 263--301
	
\bibitem{Bao} Bao G, Li P, Lin J, Triki F 2015 Inverse scattering problems with multifrequencies {\it Inverse Probl.} {\bf 31} 093001
	
\bibitem{BaoXuYin17}Bao G, Xu L, Yin T 2017 An accurate boundary element method for the exterior elastic scattering problem in two dimensions {\it J Comput Phys.} {\bf 348} 343---363
	
\bibitem{Bramble08}Bramble J H, Pasciak J E 2008 A note on the existence and uniqueness of solutions of frequency domain elastic wave problems: A priori estimates in $H^1$ {\it J Math Anal Appl} {\bf 345} 396--404
	
\bibitem{era} Chang Y, Guo Y 2022 Simultaneous recovery of an obstacle and its excitation sources from near-field scattering data {\it Electronic Research Archive} {\bf 30} 1296--1321
	
\bibitem{CKfactorization} Charalambopoulos A, Kirsch A, Anagnostopoulos K, Gintides D Kiriaki K 2007 The factorization method in inverse elastic scattering from penetrable bodies {\it Inverse Probl.} {\bf 23} 27--51
	
\bibitem{RTMChen} Chen Z, Huang G 2015 Reverse time migration for extended obstacles: elastic waves {\it Scientia Sinica Math. } {\bf 45} 1103--1114
	
\bibitem{CKReview} Colton D, Kress R 2018 Looking back on inverse scattering theory {\it SIAM Review} {\bf 60} 779--807

\bibitem{CK19} Colton D, Kress R 2019 {\it Inverse Acoustic and Electromagnetic Scattering Theory}, $4^{\text{th}}$ edition, Springer-Nature, Switzerland

\bibitem{DLW20} Diao H, Liu H and Wang L 2020 On generalized Holmgren’s principle to the Lam\'{e} operator with applications to inverse elastic problems {\it Calc. Var.} {\bf 59} 179

\bibitem{DLW21}  Diao H, Liu H and Wang L 2021 Further results on generalized Holmgren’s principle to the Lam\'{e} operator and applications {\it J. Differ. Equ.} {\bf 309} 841--82

\bibitem{DLS21} Diao H, Liu H and Sun B 2021 On a local geometric property of the generalized elastic transmission eigenfunctions and application {\it Inverse Probl.} {\bf 37} 105015

\bibitem{Dong1} Dong H, Lai J, Li P 2019 Inverse obstacle scattering for elastic waves with phased or phaseless far-field data {\it SIAM J. Imaging Sci.} {\bf 12} 809--838

\bibitem{Dong2} Dong H, Lai J, Li P 2021 A highly accurate boundary integral method for the elastic obstacle scattering problem {\it Math. Comp.} {\bf 90} 2785--2814

\bibitem{LameMono} Gilbarg D, Trudinger N S 1997 {\it Elliptic Partial Differential Equations of second order}, Springer, Berlin

\bibitem{Guizina} Guizina B, Chikichev I 2007 From imaging to material identification: A generalized concept of topological sensitivity {\it J. Mech. Phys. Solid.} {\bf 55} 245--279

\bibitem{HLL14} Hu G, Li J and Liu H 2014 Recovering complex elastic scatterers by a single far-field pattern {\it J. Differ. Equ.} {\bf 257} 469--489

\bibitem{HeLiuWang19} He Y, Liu H, Wang X A novel quantitative inverse scattering scheme using  interior resonant modes {\it arXiv:2205.03984v1}
	
\bibitem{JiLiuXiDSM} Ji X, Liu X, Xi Y 2018 Direct sampling methods for inverse elastic scattering problems {\it Inverse Probl.} {\bf 34} 035008

\bibitem{LZ22} Lai J and Zhang J 2022 Fast inverse elastic scattering of multiple particles in three dimensions {\it Inverse Probl.} {\bf 38} 104002 

\bibitem{LiLiuMa19} Li J, Liu H, Ma S 2019 Determining a random Schr\"{o}dinger equation with unknown source and potential {\it SIAM J. Math. Anal.} {\bf 51} 3465--3491
	
\bibitem{LiLiuMa21} Li J, Liu H, Ma S 2021 Determining a random Schr\"{o}dinger operator: both potential and source are random {\it Comm. Math. Phys.} {\bf 381} 527--556
	
\bibitem{28} Ling L, Hon Y, Yamamoto M, 2005 Inverse source identification for Poisson equation {\it Inverse Probl. Sci. Eng.} {\bf 13} 433--447
	
\bibitem{Long} Long Q, Motamed M, Tempone R 2015 Fast Bayesian optimal experimental design for seismic source inversion {\it Comput. Methods Appl. Mech. Eng.} {\bf 291} 123--145
	
\bibitem{SP} Sj\'{o}green B, Petersson N.A. 2014 Source estimation by full wave form inversion, {\it J. Sci. Comput.} {\bf 59} 247--276
	
\bibitem{Wang22} Wang X, Guo Y, Bousba S 2022 Direct imaging for the moment tensor point sources of elastic waves {\it J. Comput. Phys} {\bf 448} 110731
	
\bibitem{ZGWC22} Zhang D, Guo Y, Wang Y and Chang Y Co-inversion of a scattering cavity and its internal sources: uniqueness, decoupling and imaging {\it arXiv: 2207.06133}

\bibitem{ZWG22} Zhang D, Wu Y and Guo Y Imaging an acoustic obstacle and its excitation sources from phaseless near-field data {\it arXiv: 2212.09369}
	
\end{thebibliography}
\end{document}